\theoremstyle{definition}
\newtheorem{Def}{Definition}[section]
\newtheorem{ex}[Def]{Example}
\newtheorem{rem}[Def]{Remark}
\theoremstyle{plain}
\newtheorem{thm}[Def]{Theorem}
\newtheorem*{thm*}{Theorem}
\newtheorem{lem}[Def]{Lemma}
\newtheorem{cor}[Def]{Corollary}
\newtheorem*{cor*}{Corollary}
\newtheorem{con}[Def]{Conjecture}
\newtheorem*{con*}{Conjecture}
\newtheorem*{frag*}{Question}
\newtheorem*{verm*}{Vermutung}
\newcommand{\Seq}{\operatorname{Seq}}
\newcommand{\sq}{\operatorname{sq}}
\newcommand{\pr}{\operatorname{pr}}
\newcommand{\Sym}{\operatorname{Sym}}
\newcommand{\Hom}{\operatorname{Hom}} %External Hom
\newcommand{\Spec}{\operatorname{Spec}}
\newcommand{\cI}{{\mathcal I}}
\newcommand{\fP}{{\mathfrak P}}
\newcommand{\fm}{{\mathfrak m}}
\newcommand{\fp}{{\mathfrak p}}
\newcommand{\A}{{\mathbb A}}
\newcommand{\G}{{\mathbb G}}
\newcommand{\C}{{\mathbb C}}
\newcommand{\R}{{\mathbb R}}
\newcommand{\pp}{\mathbb{P}}
\newcommand{\N}{{\mathbb N}}
\title[Equivariant geometry in infinite affine space]{Equivariant algebraic and semi-algebraic geometry of infinite affine space}
\author{Mario Kummer}
\address{Technische Universit\"at Dresden, Germany} 
\email{mario.kummer@tu-dresden.de}
\author{Cordian Riener}
\address{UiT The Arctic University of Norway}
\email{cordian.riener@uit.no}
\newcommand{\comment}[1]{}
\begin{document}

\subjclass[2010]{Primary: 13E05, 14P10}

\begin{abstract}
 We study $\Sym(\infty)$-orbit closures of not necessarily closed points in the Zariski spectrum of the infinite polynomial ring $\C[x_{ij}:\, i\in\N,\,j\in[n]]$. Among others, we characterize invariant prime ideals in this ring. Furthermore, we study projections of basic equivariant semi-algebraic sets defined by  $\Sym(\infty)$ orbits of polynomials in $\R[x_{ij}:\, i\in\N,\,j\in[n]]$. For $n=1$ we prove a quantifier elimination type result which fails for $n>1$. 
\end{abstract}

\maketitle

\section{Introduction}
From the point of view of commutative algebra the ring $\C[x_{ij}:\, i\in\N,\,j\in[n]]$, where $[n]=\{1,\ldots, n\}$, is a complicated object, as it is, for example, not  a Noetherian. On the other hand, various results from classical commutative algebra from finite dimensional commutative rings have been established in an equivariant version, with respect to the natural permutation action of the infinite symmetric group $\Sym(\infty)$ on the first index.  Most notably, it was observed by Cohn in  \cite{Co67, cohen} that symmetric ideals in this ring satisfy the ascending chain condition, i.e., it is $\Sym(\infty)$-Noetherian. This result was independently recovered by  Aschenbrenner and Hillar \cite{AH07}. The results presented in this article are rather of geometrical flavor. Compared to the sophisticated set of techniques (see for example \cite{soph1,soph3,soph2}) that has been developed by various authors in the subsequent years we will pursue an elementary approach to understand the geometry of symmetric algebraic and semi-algebraic sets in the spectrum $\A^n_\infty$ of this ring. In fact the proofs of our main results do not even make use of equivariant Noetherianity.

The essence of our approach is to study orbit closures of (not necessarily) closed points of $\A^n_\infty$. After characterizing orbit closures of closed points in \Cref{prop:orbitclos}, we prove in \Cref{thm:symprime} that fixed points of $\A^n_\infty$, i.e. invariant prime ideals, are exactly those points that arise as the image of some natural fixed points under the base change of $\A^n_\infty$ to some field extension. In \Cref{thm:familiesfixed} we give an alternative description of (not necessarily closed) fixed points. From this one can easily construct infinite specialization chains of fixed points in $\A^n_\infty$ when $n>1$ (\Cref{ex:infichain}). Note that it is well-known that this cannot happen when $n=1$. In this case any such chain has length $\leq3$, see also \Cref{ex:n1}. This highlights the special role played by the $n=1$ case which we also observe in \Cref{sec:semialg} in the semi-algebraic setup.  While in the case $n=1$ our results are not as strong as the results from \cite{nagpal2020symmetric, nagpal2021symmetric} where a characterization of invariant radical ideals \cite{nagpal2020symmetric} and invariant ideals \cite{nagpal2021symmetric} is given, our results also apply to larger $n>1$. Our results lead naturally to \Cref{con:sober} which states that the Kolmogorov quotient of the orbit space is a spectral space, i.e. it is homeomorphic to the Zariski spectrum of a commutative ring \cite{hochster}. Together with \Cref{thm:orbitclos} this would provide a description of invariant radical ideals.

We further try the same approach of studying orbit closures for the semi-algebraic setup in order to establish an equivariant version of quantifier elimination. Again the situation with $n>1$ differs substantially from the case $n=1$. In \Cref{thm:tarskiseid} we prove an equivariant version of Tarski--Seidenberg for the case $n=1$ and \Cref{ex:last} shows that the analogous statement fails for $n>1$.
This fits with \cite[Theorem on p.~14]{moitra} where it is shown  that a naive version of quantifier elimination over the reals cannot exist in the infinite symmetric setup. Note that for the setup considered  in \cite{moitra}, where both index sets are infinite, also the results on the side of commutative algebra are generally less well behaved \cite[Proposition 5.2]{AH07}.  

\hspace{0.4cm}

 \noindent \textbf{Acknowledgements.}
This article was initiated during the visit of the first author to Tromsø in 2018 financed through the \emph{Pure Mathematics in Norway} project of the Trond Mohn foundation. The second author also acknowledges financial support by the Tromsø Research foundation under grant 17mattecr. We further would like to thank Jan Draisma for helpful comments following a talk on the results of this article by the first author at the SIAM Conference on Applied Algebraic Geometry 2019 in Bern. 

\section{Preliminaries from algebraic geometry}
In the following let $K$ always denote an algebraically closed field with uncountably many elements. The affine scheme $\Spec(K[x_1,\ldots,x_n])$ over $K$ is denoted by $\A^n_K$ or by $\A^n$ if the ground field does not matter or is clear from the context. We are mostly interested in the case $K=\C$ but in the course of the proofs we have to work over other algebraically closed fields. Consider a (not necessarily finitely generated) $K$-algebra $A$ and let $X=\Spec(A)$ the associated affine scheme over $K$. We denote by $X^{\textnormal{max}}$ the subspace of $X$ consisting of all closed points of $X$, i.e. maximal ideals of $A$.
Recall that for any field extension $K\subset L$ the set $X(L)=\Hom_K(A,L)$ of $L$-rational points admits a natural map $X(L)\to X$ that sends a homomorphism of $K$-algebras to its kernel. If $K=L$, this map is injective. The pullback of the Zariski topology on $X$ to $X(L)$ is called the Zariski topology on $X(L)$. The base change $X_L$ of $X$ to $L$ is the affine scheme $\Spec(A\otimes_K L)$ over $L$. Composing with the natural map $A\to A\otimes_K L$ gives a natural map $X_L(L)\to X(L)$ where $X_L(L)=\Hom_L(A\otimes_K L,L)$. By the universal property of the tensor product $\otimes_K$  this map is a bijection. Further, the map $X_L(L)\to X(L)$ is continuous but in general not a homeomorphism.

In analogy to the definition of a scheme of finite type over $K$, we define an \emph{affine scheme of countable type over $K$} to be a scheme over $K$ of the form $\Spec(A)$ where $A$ can be generated by countably many elements as an $K$-algebra. Clearly, any closed subscheme of an affine scheme of countable type over $K$ is again an affine scheme of countable type over $K$. Recall the following result by Lang \cite{langnullstellensatz}.

\begin{thm}[Lang]\label{thm:langcount}
 Let $\cI$ be some index set of countable cardinality and consider $B=K[x_i:\, i\in \cI]$ the polynomial ring with variables indexed by $\cI$. Further let $X=\Spec(B)$, $J\subset B$  an ideal and $Z\subset X(K)$ the set of $K$-rational points of $X$ where all elements of $J$ vanish. Then the following holds:
 \begin{enumerate}
     \item If $f\in B$ vanishes on $Z$, then $f^k\in J$ for some $k\in\N$.
     \item If $J\neq B$, then $Z$ is not empty.
     \item Let $K\subset L$ be some field extension and $a_i\in L$ for $i\in\cI$.  The ring extension  $K[a_i:\,i\in\cI]$ is a field if and only if $a_i\in K$ for all $i\in\cI$.
 \end{enumerate}
\end{thm}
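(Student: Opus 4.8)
The three assertions are the countable-variable analogues of the strong Nullstellensatz, the weak Nullstellensatz, and Zariski's lemma. My plan is to prove (3) first, since it is the crux and the only place where the uncountability of $K$ is genuinely used, and then to deduce (2) and (1) from it by the classical maximal-ideal and Rabinowitsch arguments, which carry over verbatim to countably many variables once (3) is available.

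\emph{Part (3).} The implication ``$\Leftarrow$'' is trivial, so suppose $R:=K[a_i:\,i\in\cI]$ is a field. Since $\cI$ is countable, the monomials in the $a_i$ form a countable spanning set of $R$ over $K$, hence $\dim_K R\le\aleph_0$. I claim $R$ is algebraic over $K$: if not, choose $t\in R$ transcendental over $K$, so $K(t)\subseteq R$; but the family $\{(t-c)^{-1}:\,c\in K\}\subseteq K(t)$ is $K$-linearly independent (partial fractions) and has cardinality $|K|>\aleph_0$, contradicting $\dim_K R\le\aleph_0$. So $R$ is an algebraic field extension of the algebraically closed field $K$, whence $R=K$ and every $a_i$ lies in $K$. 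This is exactly where uncountability enters — over a countable algebraically closed field such as $\overline{\Q}$ one has the counterexample $\overline{\Q}(t)$, which is a countably generated $\overline{\Q}$-algebra that is a field.

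\emph{Part (2).} Pick a maximal ideal $\fm\subseteq B$ with $J\subseteq\fm$. Then $B/\fm$ is a field generated over its subfield $K$ by the countably many residues $\bar x_i$, so part (3) forces $\bar x_i\in K$ and hence $B/\fm=K$. Thus the quotient map $B\to B/\fm=K$ is evaluation at the point $a:=(\bar x_i)_{i\in\cI}\in K^{\cI}$, and since every element of $J\subseteq\fm$ maps to $0$ we get $a\in Z$, so $Z\neq\emptyset$.

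\emph{Part (1).} Assume $f$ vanishes on $Z$ but $f^k\notin J$ for all $k\in\N$ (the case $f=0$ being trivial). Adjoin one new variable $z$, so $B':=B[z]$ is again a polynomial ring on a countable index set, and let $J'\subseteq B'$ be generated by $J$ together with $1-zf$. If $J'=B'$, an identity $1=g_0\,(1-zf)+\sum_j g_j h_j$ with $g_0,g_j\in B'$ and $h_j\in J$, after applying the homomorphism $B'\to B_f$, $z\mapsto 1/f$, and clearing denominators (using that $B$ is a domain, so $B\hookrightarrow B_f$), yields $f^N\in J$ for some $N$, a contradiction; hence $J'\subsetneq B'$. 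By (2) there is $(a,b)\in K^{\cI}\times K$ annihilating every generator of $J'$; from $1-b\,f(a)=0$ we get $f(a)\neq0$, while $a\in Z$ since every element of $J$ vanishes at $a$ — contradicting that $f$ vanishes on $Z$. Therefore $f^k\in J$ for some $k$. The only real obstacle in the whole argument is Part (3); once the linear independence of the $(t-c)^{-1}$ over the uncountable field $K$ is in hand, the rest is standard.
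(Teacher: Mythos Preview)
The paper does not actually prove this theorem: it is quoted as a result of Lang with a reference to \cite{langnullstellensatz} and no argument is supplied. So there is nothing to compare your proposal against on the paper's side.

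That said, your proof is correct and is essentially the standard one. The cardinality argument in (3) --- countable spanning set for $R$ versus the uncountable linearly independent family $\{(t-c)^{-1}:c\in K\}$ --- is exactly the reason the standing hypothesis that $K$ be uncountable is imposed in the paper, and your remark about $\overline{\Q}(t)$ pinpoints why it cannot be dropped. Parts (2) and (1) are then the usual maximal-ideal and Rabinowitsch reductions; your handling of the edge case $f=0$ and the injectivity $B\hookrightarrow B_f$ is fine since $B$ is a domain. One cosmetic remark: in (1) your case split is slightly redundant --- once you observe that $J'\neq B'$ would produce a zero $a\in Z$ with $f(a)\neq0$, you can conclude $J'=B'$ directly and run Rabinowitsch without the initial ``assume $f^k\notin J$ for all $k$'' --- but this does not affect correctness.
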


From this we can deduce that affine schemes of countable type over $K$ share several desirable properties with affine schemes of finite type over $K$.

\begin{cor}\label{lem:langcount}
 Let $X=\Spec(A)$ be an affine scheme of countable type over $K$.
 \begin{enumerate}
     \item $X(K)$ is dense in $X$.
     \item The image of $X(K)\to X$ is $X^{\textnormal{max}}$.
     \item If $I\subset A$ is an ideal and $Z\subset X(K)$ the zero set of $I$, then the set of all $f\in A$ that vanish on $Z$ is the radical ideal $\sqrt{I}$.
 \end{enumerate}
\end{cor}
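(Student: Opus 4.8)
The plan is to deduce all three statements from Lang's theorem (\Cref{thm:langcount}) by reducing to the case of a polynomial ring. Since $A$ is generated by countably many elements as a $K$-algebra, write $A = B/J$ where $B = K[x_i : i \in \cI]$ for a countable index set $\cI$ and $J \subset B$ an ideal. Then $X = \Spec(A)$ is the closed subscheme $V(J)$ of $Y := \Spec(B)$, and $X(K) = \{ p \in Y(K) : J \subset \ker p\}$ is precisely the zero set $Z(J)$ of $J$ inside $Y(K)$. This identification is what lets us transport each part of Lang's theorem.

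\medskip

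\noindent\textbf{Part (1).} First I would show $\overline{X(K)} = X$. A closed subset of $X$ has the form $V(I/J)$ for some ideal $I \supset J$; it contains $X(K) = Z(J)$ iff every $f \in I$ vanishes on $Z(J)$, which by \Cref{thm:langcount}(1) forces $f^k \in J \subset I$, hence $f \in \sqrt{I} $, i.e. the closed set already contains the generic-type points of $V(\sqrt I)$. The point is that if $V(I/J) \supseteq X(K)$ then $I \subseteq \sqrt{J}$, so $V(I/J) \supseteq V(\sqrt J / J) = X$. Thus the only closed set containing $X(K)$ is $X$ itself, proving density.

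\medskip

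\noindent\textbf{Part (2).} For one inclusion, any $p \in X(K) = \Hom_K(A,K)$ has kernel a maximal ideal since $A/\ker p \cong K$ is a field, so the image lies in $X^{\mathrm{max}}$. Conversely, let $\fm \subset A$ be a maximal ideal; pulling back to $B$ gives a maximal ideal $\fn \supset J$ with $B/\fn \cong A/\fm$. I claim $B/\fn \cong K$: writing $a_i$ for the image of $x_i$ in $B/\fn$, this is a ring extension $K[a_i : i \in \cI]$ which is a field, so by \Cref{thm:langcount}(3) all $a_i \in K$, forcing $B/\fn = K$. Then the quotient map $B \to B/\fn = K$ factors through $A = B/J$ (as $J \subset \fn$), giving a point of $X(K)$ with kernel $\fm$.

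\medskip

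\noindent\textbf{Part (3).} Let $\widetilde{I} \supset J$ be the preimage of $I$ in $B$, so that $Z \subset Y(K)$ is the common zero set of $\widetilde I$. An element $f \in A$ lifts to $\widetilde f \in B$, and $f$ vanishes on $Z$ iff $\widetilde f$ vanishes on $Z = Z(\widetilde I)$, which by \Cref{thm:langcount}(1) holds iff $\widetilde f^{\,k} \in \widetilde I$ for some $k$, i.e. iff $f \in \sqrt{I}$ in $A$. Conversely any $f \in \sqrt I$ obviously vanishes on $Z$. Hence the set of functions vanishing on $Z$ is exactly $\sqrt I$. I do not anticipate a serious obstacle here: the only thing requiring a little care is the bookkeeping of passing between $B$ and the quotient $A$, and making sure in part (2) that the residue field argument via \Cref{thm:langcount}(3) is applied to the right ring extension; everything else is a direct translation of Lang's theorem.
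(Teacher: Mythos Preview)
Your proof is correct and follows essentially the same approach as the paper: present $A$ as a quotient $B/J$ of a countable polynomial ring and translate each statement into an application of the corresponding part of \Cref{thm:langcount}. The only slip is cosmetic---in Part~(1) the displayed ``$f\in\sqrt{I}$'' should read $f\in\sqrt{J}$---but you correct this in the next sentence, and the argument goes through exactly as in the paper.
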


\begin{proof}
 By assumption there is a countable index set $\cI$ and a surjective homomorphism $\varphi:B\to A$ of $K$-algebras where $B=K[x_i:\, i\in \cI]$. Let $J=\ker(\varphi)$.
 In order to prove the first statement, let $f\in A$ vanish on $X(K)$. If $g\in B$ is some element with $\varphi(g)=f$, then by (1) of \Cref{thm:langcount} we have $g^k\in J$ for some $k\in\N$ which shows that $f$ is nilpotent in $A$ and thus vanishes on $X$. This shows that $X(K)$ is dense in $X$. 
 For the second statement let $\fm$ be a maximal ideal of $A$. Then $A/\fm$ is a field extension of $K$ which is generated as $K$-algebra by the residue classes of $\varphi(x_i)$ for $i\in\cI$. Thus $A/\fm$ must be $K$ by (3) in \Cref{thm:langcount}. On the other hand, the kernel of any homomorphism $A\to K$ of $K$-algebras is a maximal ideal.
 
 For the third statement let $g\in B$ such that $\varphi(g)=f$. Then $g$ vanishes on the set of all $K$-rational points of $\Spec(B)$ where all elements of $\varphi^{-1}(I)$ vanish. Thus by (1) in \Cref{thm:langcount} we have that $g$ is in the radical of $\varphi^{-1}(I)$ which implies that $f$ is in the radical of $I$.
\end{proof}

\begin{cor}
 Let $f:X\to Y$ be a dominant morphism of affine schemes of countable type over $K$. The image of $X(K)$ is both dense in $Y(K)$ and in $Y$.
\end{cor}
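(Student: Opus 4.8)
The plan is to deduce the statement formally from \Cref{lem:langcount} together with the continuity of $f$. Write $X=\Spec(A)$ and $Y=\Spec(B)$ and let $\varphi\colon B\to A$ be the homomorphism of $K$-algebras inducing $f$. First I would record that $f$ carries $X(K)$ into $Y(K)$: a $K$-point $p\in X(K)=\Hom_K(A,K)$ is sent to $p\circ\varphi\in\Hom_K(B,K)=Y(K)$, and since the natural map $X(K)\to X$ is $q\mapsto\ker q$ while $f$ acts by $\fp\mapsto\varphi^{-1}(\fp)$, the resulting square relating $X(K)\to Y(K)$ with $X\to Y$ commutes. In particular the image of $X(K)$ under $f$ lies in $Y(K)$, hence in $Y$.

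For density in $Y$: by part (1) of \Cref{lem:langcount} the set $X(K)$ is dense in $X$, so continuity of $f$ gives $f(X)=f(\overline{X(K)})\subseteq\overline{f(X(K))}$. Passing to closures and using that $f$ is dominant, we get $Y=\overline{f(X)}\subseteq\overline{f(X(K))}$, so $f(X(K))$ is dense in $Y$.

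For density in $Y(K)$: since $Y(K)$ carries the subspace topology induced from $Y$, the closure of $f(X(K))$ inside $Y(K)$ equals $Y(K)\cap\overline{f(X(K))}$, which by the previous paragraph is $Y(K)\cap Y=Y(K)$. This completes the argument.

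I do not expect a genuine obstacle: the proof is essentially formal. The two points deserving a line of care are checking that the maps on $K$-points are compatible with $f$, so that it is legitimate to view $f$ as a map $X(K)\to Y(K)$, and noticing that the hypothesis \emph{of countable type} enters only through the appeal to \Cref{lem:langcount}. If one prefers to avoid the topological bookkeeping, there is a slightly longer Nullstellensatz-style variant: if $g\in B$ vanishes on the image of $X(K)$, then $\varphi(g)$ vanishes on $X(K)$, hence is nilpotent in $A$ by part (1) of \Cref{lem:langcount}, so $g^k\in\ker\varphi$ for some $k\in\N$; dominance of $f$ forces $\ker\varphi$ to consist of nilpotent elements, whence $g$ is nilpotent in $B$ and thus vanishes on all of $Y$ and on $Y(K)$. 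A standard argument with basic open sets then yields density in both $Y$ and $Y(K)$.
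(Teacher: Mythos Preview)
Your argument is correct and is precisely the formal deduction from \Cref{lem:langcount} that the paper intends; indeed the paper states this corollary without proof, and the route you take (density of $X(K)$ in $X$, continuity of $f$, then passing to the subspace topology on $Y(K)$) is the natural one-line justification. Your optional Nullstellensatz variant is also fine and amounts to unwinding the same content algebraically.
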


For any field extension $K\subset L$ and any two affine schemes $X,Y$ over $K$ we have $(X\times Y)(L)=X(L)\times Y(L)$ (as sets).  We prove some basic facts on such products.

\begin{lem}\label{lem:opendensedense}
 Let $T$ be a topological space and $S\subset T$ a dense subset. Then $S\cap U$ is dense for every open dense subset $U\subset T$.
\end{lem}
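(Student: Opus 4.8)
The plan is to argue directly from the standard criterion that a subset $D\subseteq T$ is dense if and only if $D$ meets every nonempty open subset of $T$. So I would fix an arbitrary nonempty open set $V\subseteq T$ and show that $(S\cap U)\cap V\neq\emptyset$; since $V$ is arbitrary this yields that $S\cap U$ is dense.

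First I would use the hypothesis on $U$. Because $U$ is open and $V$ is open, the set $U\cap V$ is open; and because $U$ is dense while $V$ is a nonempty open set, $U\cap V\neq\emptyset$. Thus $U\cap V$ is a nonempty open subset of $T$. This is the only place where the full strength of the assumption on $U$ (namely that it is \emph{open} and dense, not merely dense) enters: without openness of $U$ the intersection $U\cap V$ need not be open, and the argument would break down.

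Next I would apply density of $S$ to this nonempty open set $U\cap V$, obtaining a point $x\in S\cap(U\cap V)$. Then $x\in S$, $x\in U$, and $x\in V$, so $x\in (S\cap U)\cap V$, which is therefore nonempty. As $V$ ranged over all nonempty open subsets of $T$, the set $S\cap U$ meets every nonempty open set and is hence dense in $T$.

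I do not anticipate any real obstacle here; the statement is elementary point-set topology, and the short chain of implications above is a complete argument. The only subtlety worth flagging in the write-up is the role of openness of $U$, as noted above.
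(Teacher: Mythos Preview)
Your argument is correct and is essentially the contrapositive of the paper's proof: the paper assumes $S\cap U\cap V=\emptyset$, uses density of $S$ to conclude $U\cap V=\emptyset$, and then density of $U$ to conclude $V=\emptyset$, whereas you start from $V\neq\emptyset$ and run the same two steps in reverse order. The content is identical.
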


\begin{proof}
 Let $V\subset T$ be an open subset with $S\cap U\cap V=\emptyset$. Then the closure of $S$ is contained in the complement of $U\cap V$. Thus $U\cap V=\emptyset$ since $S$ is dense. But then by the same argument $V=\emptyset$ since $U$ is dense as well.
\end{proof}

\begin{lem}\label{lem:proddense}
 Let $X$ and $Y$ be affine schemes over $K$. Let $A\subset X(K)$ and $B\subset Y(K)$ be dense subsets of $X(K)$ and $Y(K)$ respectively. Then $A\times B$ is dense in $(X\times Y)(K)=X(K)\times Y(K)$.
\end{lem}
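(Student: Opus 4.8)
The plan is to argue directly with the Zariski topology on $(X\times Y)(K)$, never passing to the product topology: recall that on $(X\times Y)(K)$ the Zariski topology is strictly finer than the product of the Zariski topologies on $X(K)$ and $Y(K)$ (already the complement of a ``diagonal'' hypersurface contains no nonempty box $D(f)\times D(g)$), so density here is a genuinely stronger statement than coordinatewise density and must be proved accordingly. Write $X=\Spec(R)$ and $Y=\Spec(S)$, so that $X\times Y=\Spec(R\otimes_K S)$ and, under the identification $(X\times Y)(K)=X(K)\times Y(K)$, a point $(x,y)$ corresponds to $\varphi_x\otimes\varphi_y\colon R\otimes_K S\to K$, where $\varphi_x\colon R\to K$ and $\varphi_y\colon S\to K$ are the homomorphisms attached to $x$ and $y$. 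The principal open sets $D(h)=\{(x,y):(\varphi_x\otimes\varphi_y)(h)\neq 0\}$ with $h\in R\otimes_K S$ form a basis of the topology, so it suffices to show that every nonempty $D(h)$ meets $A\times B$.

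To do this I would fix $h$, write it as a finite sum $h=\sum_{i=1}^m f_i\otimes g_i$ with $f_i\in R$, $g_i\in S$, pick $(x_0,y_0)\in D(h)$, and then move the two coordinates into $A$ and $B$ one at a time. First, set $g'=\sum_{i=1}^m\varphi_{x_0}(f_i)\,g_i\in S$; since $\varphi_{y_0}(g')\neq 0$, the set $D(g')\subset Y(K)$ is nonempty open, so by density of $B$ there is $y_1\in B\cap D(g')$, i.e.\ $\sum_i\varphi_{x_0}(f_i)\varphi_{y_1}(g_i)\neq 0$. Next, set $f'=\sum_{i=1}^m\varphi_{y_1}(g_i)\,f_i\in R$; then $\varphi_{x_0}(f')=\varphi_{y_1}(g')\neq 0$, so $D(f')\subset X(K)$ is nonempty open, and by density of $A$ there is $x_1\in A\cap D(f')$. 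Finally $(\varphi_{x_1}\otimes\varphi_{y_1})(h)=\sum_i\varphi_{x_1}(f_i)\varphi_{y_1}(g_i)=\varphi_{x_1}(f')\neq 0$, so $(x_1,y_1)\in D(h)\cap(A\times B)$, which finishes the argument.

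The same idea can be phrased without coordinates: for a fixed $x_0\in X(K)$ the map $Y(K)\to(X\times Y)(K)$, $y\mapsto(x_0,y)$, is the map on $K$-points induced by the morphism $Y\cong\Spec(K)\times Y\to X\times Y$ determined by $x_0$, hence continuous, so it pulls the nonempty open set $D(h)$ back to a nonempty open subset of $Y(K)$, which meets $B$ by density; one then repeats with the roles of the two coordinates exchanged. I do not expect a serious obstacle here; the only point that genuinely needs care is exactly the one flagged above — not to confuse the Zariski topology on the product with the product of the Zariski topologies, which is precisely why $h$ must be allowed to be a sum of several elementary tensors and why the two coordinates have to be adjusted successively rather than simultaneously. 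It is perhaps worth remarking that this lemma uses neither Lang's theorem nor the standing hypotheses that $K$ be algebraically closed and uncountable.
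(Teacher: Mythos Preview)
Your proof is correct and follows essentially the same two-step strategy as the paper: adjust one coordinate at a time, using that the slice maps $y\mapsto(a,y)$ and $x\mapsto(x,b)$ are continuous (indeed closed immersions on $K$-points). The paper phrases this via closures---first $\overline{A\times B}\supseteq A\times Y(K)$, then $\overline{A\times Y(K)}=X(K)\times Y(K)$---while you work dually with a basic open $D(h)$ and an explicit tensor decomposition; your coordinate-free rephrasing in the last paragraph is in fact the paper's argument almost verbatim.
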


\begin{proof}
 For every $a\in A$ the set $\{a\}\times B$ is dense in $\{a\}\times Y(K)$. Thus the closure of $A\times B$ contains $A\times Y(K)$. The closure of $A\times Y(K)$ contains $X(K)\times Y(K)$ by the same argument. Thus the closure of $A\times B$ is $X(K)\times Y(K)$.
\end{proof}

\begin{rem}
 If $K\subset L$ is a field extension, the corresponding statement of \Cref{lem:proddense} for dense subsets $A\subset X(L)$ and $B\subset Y(L)$ fails to be true in general. Indeed, let $L=K(t)$ the rational function field, $X=Y=\A^1_K$ and $A=B=\{t\}$. Then $A$ and $B$ are dense in $\A^1_K$ but $A\times B$ is contained in the diagonal $\Delta=V(x_1-x_2)\subsetneq\A^2_K$.
\end{rem}

\begin{cor}\label{cor:proddense}
 Let $X$ be an affine scheme over $K$ and $Y\subset X(K)$ a dense subset of $X(K)$. Then $Y^m=Y\times\cdots\times Y$ is dense in $(X^m)(K)=X(K)\times\cdots\times X(K)$.
\end{cor}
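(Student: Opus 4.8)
The plan is a straightforward induction on $m$, with \Cref{lem:proddense} supplying the inductive step. The case $m=1$ is precisely the hypothesis that $Y$ is dense in $X(K)$. So assume $m\geq 2$ and, as inductive hypothesis, that $Y^{m-1}$ is dense in $(X^{m-1})(K)$. Recall from the discussion preceding \Cref{lem:opendensedense} that for any two affine schemes $Z_1,Z_2$ over $K$ one has $(Z_1\times Z_2)(K)=Z_1(K)\times Z_2(K)$ as sets, and that by associativity of the fiber product over $K$ we may identify $X^{m}=X^{m-1}\times X$, hence $(X^m)(K)=(X^{m-1})(K)\times X(K)=X(K)\times\cdots\times X(K)$. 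Now apply \Cref{lem:proddense} with the affine scheme $X^{m-1}$ in the role of ``$X$'' and the affine scheme $X$ in the role of ``$Y$'', taking the dense subsets $A=Y^{m-1}\subset (X^{m-1})(K)$ (by the inductive hypothesis) and $B=Y\subset X(K)$ (by assumption). The lemma yields that $A\times B=Y^{m-1}\times Y=Y^m$ is dense in $(X^{m-1}\times X)(K)=(X^m)(K)$, which completes the induction.

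I do not anticipate any real obstacle; the argument is entirely formal. The only point deserving (routine) attention is the identification of $(X^m)(K)$ with $X(K)^m$ as a \emph{topological} space, i.e.\ that the Zariski topology it carries as the $K$-points of the scheme $X^m$ is the one implicitly used by \Cref{lem:proddense} on $(X^{m-1}\times X)(K)$. This is immediate from the definition of the Zariski topology on $K$-points as the pullback of the Zariski topology of the ambient scheme, together with the isomorphism of schemes $X^m\cong X^{m-1}\times X$, so nothing beyond the bookkeeping above is required. (Alternatively, one could simply repeat the closure argument in the proof of \Cref{lem:proddense} $m-1$ times, but phrasing it as an induction on \Cref{lem:proddense} is cleaner.)
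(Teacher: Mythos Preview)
Your proof is correct and follows exactly the paper's approach: induction on $m$, with the base case being the hypothesis and the inductive step supplied by \Cref{lem:proddense}. You have simply spelled out the identifications $(X^m)(K)\cong(X^{m-1})(K)\times X(K)$ and the topological bookkeeping more explicitly than the paper does.
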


\begin{proof}
 We prove the claim by induction on the number $m$ of factors. The claim is clear for $m=1$. The induction step follows from \Cref{lem:proddense}.
\end{proof}

\begin{lem}\label{lem:diagonalthin}
 Let $X$ be an affine scheme of finite type over $K$. Let $X=X_1\cup\cdots\cup X_s$ be the decomposition of $X$ into irreducible components. Furthermore, assume that $X_i(K)$ is infinite for all $i=1,\ldots, s$. The subset $$B=\{(x_{1},\ldots,x_r)\in X(K)^{r}:\, x_i\neq x_j \textrm{ for all }i\neq j\}$$is dense in $X(K)^{r}$.
\end{lem}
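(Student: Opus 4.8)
The plan is to reduce the statement to the irreducible components of $X^r$; a direct coordinatewise argument is not available because the Zariski topology on $X(K)^r=(X^r)(K)$ is not the product topology (for instance the complement of the diagonal in $\A^2$ contains no non-empty product $U_1\times U_2$ of open sets). Write $X=X_1\cup\cdots\cup X_s$. For a multi-index $\ba=(a_1,\ldots,a_r)\in\{1,\ldots,s\}^r$ put $W_{\ba}:=X_{a_1}\times\cdots\times X_{a_r}\subseteq X^r$. Since $K$ is algebraically closed, a product of irreducible affine $K$-schemes is irreducible, so each $W_{\ba}$ is a closed irreducible subset of $X^r$; moreover $W_{\ba}\subseteq W_{\bb}$ forces $\ba=\bb$, by projecting to a coordinate where the indices differ and using that the $X_i$ are the components of $X$. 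Hence the $W_{\ba}$ are precisely the irreducible components of $X^r$, and $X(K)^r=\bigcup_{\ba}W_{\ba}(K)$.

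As this is a finite union, it suffices to show that $B\cap W_{\ba}(K)$ is dense in $W_{\ba}(K)$ for each $\ba$; taking closures in $X(K)^r$ then recovers all of $X(K)^r$. So fix $\ba$ and set $W:=W_{\ba}$. By \Cref{lem:langcount} the set $W(K)$ is dense in $W$, hence irreducible since $W$ is. For $1\le i<j\le r$ let $\Delta_{ij}\subseteq X^r$ be the closed subscheme cut out by the equality of the $i$-th and $j$-th coordinates, so that $B\cap W(K)=W(K)\setminus\bigcup_{1\le i<j\le r}\bigl(\Delta_{ij}(K)\cap W(K)\bigr)$ is the complement in $W(K)$ of finitely many closed subsets.

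The key point is that each $\Delta_{ij}(K)\cap W(K)$ is a \emph{proper} subset of $W(K)$. Indeed $W(K)=X_{a_1}(K)\times\cdots\times X_{a_r}(K)$ with every factor non-empty, so we may pick $x_k\in X_{a_k}(K)$ for all $k\neq i$ and then, using that $X_{a_i}(K)$ is infinite, pick $x_i\in X_{a_i}(K)$ with $x_i\neq x_j$; the resulting tuple lies in $W(K)$ but not in $\Delta_{ij}$. An irreducible topological space is not a union of finitely many proper closed subsets, so $B\cap W(K)\neq\emptyset$, and being open in the irreducible space $W(K)$ it is dense. This completes the proof. There is no real obstacle here beyond getting the reduction to components right; note that the hypothesis that \emph{every} $X_i(K)$ be infinite is used exactly in the last displayed step, and that it would not be enough to assume merely that $X^r$ has only positive-dimensional components (as is seen when $X$ has an isolated point).
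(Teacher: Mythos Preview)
Your proof is correct and follows essentially the same approach as the paper's: both identify the irreducible components of $X^r$ as the products $X_{a_1}\times\cdots\times X_{a_r}$ and observe that none of them is contained in any diagonal $\Delta_{ij}$ since each $X_i(K)$ is infinite. You simply spell out in more detail why this non-containment yields density (via irreducibility of $W_{\ba}(K)$ and the fact that a nonempty open subset of an irreducible space is dense), whereas the paper leaves this implication to the reader.
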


\begin{proof}
  Since $K$ is algebraically closed, we have that the $K$-rational points of the irreducible components of $X^r$ are the sets of the form $X_{i_1}(K)\times\cdots\times X_{i_r}(K)$ for $i_j\in[s]$. None of these is contained in one of the closed subsets of the form $$\Delta_{ij}=\{(x_{1},\ldots,x_r)\in X(K)^{r}:\, x_i= x_j\}$$ for $i\neq j$ since each $X_i(K)$ is infinite. This implies the claim.
\end{proof}

\section{Orbit closures in $\A^n_{\infty}$}
We let $S=K[x_{ij}:\, i\in\N,\,j\in[n]]$. There is a natural action of the infinite symmetric group   $G=\textrm{Sym}(\infty)=\cup_{k\in\N}\textrm{Sym}(k)$ on $S$ given by $\sigma(x_{ij})=x_{\sigma(i)j}$ for all $\sigma\in G$ and $i\in\N$, $j\in[n]$. We denote by $\A^n_{K,\infty}$ the integral affine scheme $\Spec(S)$ of countable type over $K$. If the ground field $K$ is clear from the context, we will just write $\A^n_{\infty}=\A^n_{K,\infty}$. Throughout the article $L$ will always denote a field extension of $K$. The action of $G$ on $S$ induces an action on $\A^n_{\infty}$ and on $\A^n_\infty(L)$ for every field extension $L$ of $K$. We identify $\A^n_{\infty}(L)$ with the set of sequences $(p_i)_{i\in\N}\subset\A^n(L)= L^n$. Our goal is to characterize the closures of orbits of single points in $\A^n_{\infty}$: 
\begin{Def}
Given any $x\in\A^n_\infty$, the \emph{orbit closure} $\overline{Gx}$ of $x$ is the Zariski closure of the orbit $$Gx=\{\sigma(x):\, \sigma\in G\}.$$ If we equip $\overline{Gx}$ with the reduced induced closed subscheme structure, then $\overline{Gx}$ is a reduced affine scheme of countable type over $K$ on which the group $G$ acts. The orbit closure of an $L$-rational point is defined to be the orbit closure of the associated point in $\A^n_\infty$.
\end{Def}

\subsection{Orbit closures of closed points in $\A^n_\infty$} We first compute the orbit closures of $K$-rational points of $\A^n_\infty$ which are exactly the closed points of $\A^n_\infty$ by \Cref{lem:langcount}. For this we need to set up some notation.

\begin{Def}
 Let $p=(p_i)_{i\in\N}\subset \A_K^n(L)$ be a point from $\A^n_{\infty}(L)$. We denote by $V_p\subset\A^n_K$ the closure of the set $\{p_i:\,i\in\N\}$. Further for any $M\subset \A^n_K(L)$ we let $\nu(p,M)=|\{i\in\N:\,p_i\in M\}|$. For $q\in \A^n(L)$ we write $\nu(p,q):=\nu(p,\{q\})$.
\end{Def}

\begin{lem}\label{lem:vpcont}
 Let $p\in\A^n_{\infty}(L)$ and $X\subset\A^n_\infty$ its orbit closure. For every $q\in X(L)$ we have $V_q\subset V_p$.
\end{lem}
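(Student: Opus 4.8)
The plan is to exhibit a single $G$-invariant closed subset of $\A^n_\infty$ which contains $p$ and which encodes the condition ``$V_q\subset V_p$'', and then invoke the fact that $\overline{Gp}$ is the smallest $G$-invariant closed subset through $p$.

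First I would use that $K[x_1,\ldots,x_n]$ is Noetherian to write $V_p=V(g_1,\ldots,g_r)$ for finitely many $g_1,\ldots,g_r\in K[x_1,\ldots,x_n]$ (with $V_p$ carrying its reduced structure). Since every $p_i$ lies in $V_p$ by definition of $V_p$, we have $g_k(p_i)=0$ for all $i\in\N$ and all $k\in[r]$. Next, for each $i\in\N$ and $k\in[r]$, let $g_k^{(i)}\in S$ be the polynomial obtained from $g_k$ by the substitution $x_\ell\mapsto x_{i\ell}$, $\ell\in[n]$. Two elementary observations: for any field extension $K\subset L$ and any $q=(q_i)_{i\in\N}\in\A^n_\infty(L)$ one has $g_k^{(i)}(q)=g_k(q_i)$, and for every $\sigma\in G$ one has $\sigma(g_k^{(i)})=g_k^{(\sigma(i))}$. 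Hence the ideal $I\subset S$ generated by all the $g_k^{(i)}$ is $G$-invariant, so $V(I)\subset\A^n_\infty$ is a $G$-invariant closed subset.

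Now $p$ (viewed as a point of $\A^n_\infty$) lies in $V(I)$, because $g_k^{(i)}(p)=g_k(p_i)=0$ for all $i,k$; therefore $Gp\subset V(I)$, and since $V(I)$ is closed we get $X=\overline{Gp}\subset V(I)$. Consequently, for any $q\in X(L)$ the image of $q$ in $\A^n_\infty$ lies in $V(I)$, so $g_k(q_i)=g_k^{(i)}(q)=0$ for all $i\in\N$ and $k\in[r]$, i.e.\ $q_i\in V_p$ for every $i$. As $V_p$ is closed in $\A^n_K$, this yields $V_q=\overline{\{q_i:\,i\in\N\}}\subset V_p$, as claimed.

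The argument is essentially formal, so I do not expect a serious obstacle; the one point deserving a line of care is that the statement refers to the reduced orbit closure, so one should note that the set-theoretic inclusion $X\subset V(I)$ already forces $I$ to vanish at every $L$-point of $X$ — which is all that is used. In particular no appeal to Lang's theorem (\Cref{thm:langcount}) or to equivariant Noetherianity is needed for this lemma.
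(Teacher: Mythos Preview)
Your proof is correct and follows essentially the same approach as the paper's: both construct the $G$-invariant closed subscheme $V(I)\subset\A^n_\infty$ cut out by all $g_k^{(i)}$ for $g_k$ vanishing on $V_p$, observe that $p\in V(I)$ forces $X\subset V(I)$, and conclude that every $q_i$ lies in $V_p$. The only cosmetic difference is that the paper argues one polynomial $f$ at a time (so no appeal to Noetherianity of $K[x_1,\ldots,x_n]$ is needed), whereas you pick finitely many generators $g_1,\ldots,g_r$ up front; this changes nothing of substance.
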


\begin{proof}
 Let $p=(p_i)_{i\in\N}\subset \A_K^n(L)$ and $q=(q_i)_{i\in\N}\subset \A_K^n(L)$.
 Let $f\in K[x_1,\ldots,x_n]$ be a polynomial that vanishes on $V_p$. This implies that $p$ is an $L$-rational point of the closed subscheme $Y$ of $\A^n_\infty$ defined by the ideal $I\subset S$ generated by all $g_i=f(x_{i1},\ldots,x_{in})$ for $i\in\N$. The subscheme $Y$ is closed, invariant under $G$ and $Y(L)$ contains $p$. Thus we have $X\subset Y$ and in particular we have $q\in Y(L)$. This shows that $f(q_i)=0$ for all $i\in\N$ and thus implies the claim.
\end{proof}

\begin{lem}\label{lem:orbitclossubset}
 Let $p\in \A^n_{\infty}(L)$. Consider the decomposition $$V_p=V_1\cup\cdots\cup V_r$$ into irreducible components and let $W_k=V_k(L)\setminus (\cup_{i\neq k} V_i(L))$. Let $X$ be the orbit closure of $p$. Then $X(L)$ is contained in $$\mathcal{W}_p:=\{w=(w_i)_{i\in\N}\subset V_p(L):\,\forall k\in[r]:\, \nu(w,W_k)\leq \nu(p,W_k)\}.$$ 
\end{lem}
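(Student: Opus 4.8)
The plan is to fix a component index $k\in[r]$ and prove that $\nu(q,W_k)\leq\nu(p,W_k)$ for every $q\in X(L)$. Together with \Cref{lem:vpcont}, which gives $V_q\subseteq V_p$ and hence $q\subset V_p(L)$, this is exactly the assertion $X(L)\subseteq\mathcal W_p$. Since $\nu(w,W_k)\leq|\N|$ always holds, we may assume $m:=\nu(p,W_k)<\infty$. The key elementary observation is that $W_k=V_p(L)\setminus\bigcup_{j\neq k}V_j(L)$: a point of $V_p(L)$ lies in $W_k$ precisely when it lies on no component of $V_p$ other than $V_k$. Writing $I_j\subseteq K[x_1,\dots,x_n]$ for the prime ideal of $V_j$ and $\fa:=\bigcap_{j\neq k}I_j$, the Nullstellensatz over $K$ gives $V(\fa)=\bigcup_{j\neq k}V_j$; hence for $w\in V_p(L)$ one has $w\in W_k$ if and only if $h(w)\neq 0$ for some $h\in\fa$, and $w\notin W_k$ if and only if $h(w)=0$ for all $h\in\fa$.

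Next I would construct an explicit closed $G$-invariant subscheme of $\A^n_\infty$ containing $p$. For a finite set $T\subseteq\N$ with $|T|=m+1$ and a family $\mathbf h=(h_i)_{i\in T}$ with $h_i\in\fa$, set $P_{T,\mathbf h}:=\prod_{i\in T}h_i(x_{i1},\dots,x_{in})\in S$, and let $J\subseteq S$ be the ideal generated by all such $P_{T,\mathbf h}$. Because $\sigma\cdot P_{T,\mathbf h}=P_{\sigma(T),\,\mathbf h\circ\sigma^{-1}}$, the closed subscheme $Y:=V(J)$ is $G$-invariant. It contains $p$: for any $T$ with $|T|=m+1>\nu(p,W_k)$ some index $i^{*}\in T$ satisfies $p_{i^{*}}\notin W_k$, so $h_{i^{*}}(p_{i^{*}})=0$ by the observation above, whence $P_{T,\mathbf h}(p)=0$. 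A closed $G$-invariant subscheme containing $p$ contains the whole orbit $Gp$, hence its closure $X$, so $X(L)\subseteq Y(L)$.

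To conclude, let $q\in X(L)$; then $q_i\in V_p(L)$ for all $i$ by \Cref{lem:vpcont}, and $q\in Y(L)$. If $\nu(q,W_k)\geq m+1$, choose distinct $i_0,\dots,i_m$ with $q_{i_t}\in W_k$, and for each $t$ pick $h_{i_t}\in\fa$ with $h_{i_t}(q_{i_t})\neq 0$, which is possible by the observation above. Then with $T=\{i_0,\dots,i_m\}$ we get $P_{T,\mathbf h}(q)=\prod_{t}h_{i_t}(q_{i_t})\neq 0$, contradicting $q\in V(J)(L)$. Hence $\nu(q,W_k)\leq m=\nu(p,W_k)$; as $k$ was arbitrary, $q\in\mathcal W_p$.

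The one delicate point is the choice of generators of $J$: the products must run over index sets of size $m+1$, so that each is forced to vanish at $p$ simply because at most $m$ coordinates of $p$ lie in $W_k$, yet each factor must range over all of $\fa$, so that the polynomial can be made non-zero at a hypothetical $q$ with too many coordinates in $W_k$. That a single $G$-stable family of polynomials meets both demands hinges on the fact that, inside $V_p$, membership in $W_k$ is detected by non-vanishing of elements of the ideal $\fa$; the rest is bookkeeping. Note that the argument uses neither equivariant Noetherianity nor finite generation of the ideals $I_j$.
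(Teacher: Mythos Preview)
Your proof is correct and follows essentially the same route as the paper's: both construct the $G$-invariant ideal generated by products of $m+1$ elements drawn from the vanishing ideal of $\bigcup_{j\neq k}V_j$, placed in distinct blocks of variables, and then use that this ideal vanishes on $p$ (hence on $X$) while it fails to vanish on any sequence with more than $m$ entries in $W_k$. The only cosmetic difference is that the paper phrases this as the $G$-orbit of the product ideal $I_{1k}\cdots I_{m+1,k}$, whereas you index directly over all $(m{+}1)$-subsets $T\subseteq\N$; these generate the same ideal.
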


\begin{proof}
 For every $w=(w_i)_{i\in\N}\in X(L)$ we must have $w_i\in V_p(L)$ for all $i\in\N$ by \Cref{lem:vpcont}. For all $i\in\N$ and $k=1,\ldots,r$ we let $I_{ik}\subset K[x_{i1},\ldots,x_{in}]$ be the vanishing ideal of $\cup_{j\neq k} V_j$. Note that the first index of $I_{ik}$ only indicates the labeling of the variables. If $m=\nu(p,W_k)$ is finite, then all elements of the ideal generated by $$\sigma(f) \textrm{ where }f\in I_{1k}\cdots I_{m+1,k}, \, \sigma\in G$$ vanish on $p$ and thus on $X$. That implies that $\nu(w,W_k)\leq m$ for all $w\in X(L)$ and thus $X(L)\subset \mathcal{W}_p$.
\end{proof}

Now we are able to describe the orbit closure of any closed point. By \Cref{lem:langcount} any closed subset of $\A^n_\infty$ can be understood in terms of its $K$-rational points.

\begin{thm}\label{prop:orbitclos}
 Let $p\in\A^n_{\infty}(K)$ and assume that the decomposition of $V_p$ into irreducible components has the form $$V_p=V_1\cup\cdots\cup V_r\cup\{v_1\}\cup\ldots\cup\{v_s\}$$ where the $V_i(K)$ are infinite and the $v_i$ are closed points of $\A^n_K$. Let $X$ be the orbit closure of $p$. The set $X(K)$ of $K$-rational points of $X$ consists exactly of those sequences in $V_p(K)$ where each $v_j$ appears at most as often as in $p$.
\end{thm}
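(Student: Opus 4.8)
The plan is to prove both inclusions. The inclusion ``$\subseteq$'' is essentially \Cref{lem:orbitclossubset}: applying it to $p\in\A^n_\infty(K)$ with the irreducible decomposition $V_p=V_1\cup\cdots\cup V_r\cup\{v_1\}\cup\cdots\cup\{v_s\}$, note that the components $\{v_j\}$ are precisely the $W_k$ with $\nu(p,W_k)$ finite (each $v_j$ occurs finitely often because each $v_j$ is a single closed point not lying on any $V_i$, and only finitely many indices can map to it after accounting for the finitely many components; more carefully, $\nu(p,v_j)$ is automatically finite since if it were infinite then $\{v_j\}$ would not be an irreducible component but would be absorbed — actually one just observes each isolated point $v_j$ of $V_p$ has $\nu(p,v_j)<\infty$, else the closure of $\{p_i\}$ would still be $V_p$ but we could argue $v_j$ appears only finitely since $V_p$ is the \emph{closure}; the honest statement is: the $W_k$ corresponding to the infinite components $V_i$ have $\nu(p,W_k)=\infty$, imposing no constraint, while those corresponding to $\{v_j\}$ give exactly the condition that $v_j$ appears at most $\nu(p,v_j)$ times). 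Hence $X(K)\subseteq\cW_p$ translates into: every sequence in $X(K)$ lies in $V_p(K)$ and contains each $v_j$ at most as often as $p$ does.

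For the reverse inclusion ``$\supseteq$'', let $\cZ\subset\A^n_\infty(K)$ denote the set of sequences $w=(w_i)_{i\in\N}\subset V_p(K)$ in which each $v_j$ appears at most $\nu(p,v_j)$ times. I must show $\cZ\subseteq X(K)$, i.e. every such $w$ lies in the Zariski closure of the orbit $Gp$. By \Cref{lem:langcount}(3) it suffices to show that every $f\in S$ vanishing on $Gp$ also vanishes at $w$; equivalently, $w$ lies in the closure of $Gp$ inside $\A^n_\infty(K)$ with its (pulled-back) Zariski topology. A basic Zariski-closed neighborhood condition reduces to finitely many coordinates: it suffices to show that for every finite $N$ and every $w\in\cZ$, the truncation $(w_1,\ldots,w_N)$ lies in the Zariski closure in $(\A^n_K)^N$ of the set of truncations $\{(\sigma(p)_1,\ldots,\sigma(p)_N):\sigma\in G\}$, since any polynomial relation on $Gp$ involves only finitely many variables. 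So the heart of the argument is a finite-dimensional density statement.

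To prove that finite-dimensional statement, fix $N$ and $w\in\cZ$. Among $w_1,\ldots,w_N$, the ones equal to some $v_j$ number at most $\nu(p,v_j)$, so we can choose indices in $p$ realizing those exact values; the remaining $w_i$ lie in $\bigcup_k V_k(K)$, and since each $V_k(K)$ is infinite and $\{p_i\}$ is dense in $V_k$, the points of $p$ lying in (a neighborhood within) $V_k$ form an infinite set whose truncations are dense there. Concretely: partition $[N]$ according to which irreducible component of $V_p$ the point $w_i$ belongs to (choosing one if several contain it, though for the $v_j$ it is forced). For the block of indices mapped to a fixed infinite component $V_k$, I use that $\{i:p_i\in V_k, p_i\notin\text{other components}\}$ is infinite and, restricted to that set, the $p_i$ are Zariski-dense in $V_k$ — combined with \Cref{lem:diagonalthin} and \Cref{cor:proddense} applied to $V_k$, a generic choice of distinct such indices has truncation arbitrarily close (Zariski-) to the prescribed tuple in $V_k(K)^{(\text{block size})}$. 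For the $v_j$-blocks there is nothing to approximate. Assembling one permutation $\sigma\in G$ that simultaneously routes, for each block, the chosen indices to the required positions $1,\ldots,N$ (possible since $G=\Sym(\infty)$ contains all finitely-supported permutations, and the chosen source indices are finite in number and can be taken pairwise distinct across blocks), we get $(\sigma(p)_1,\ldots,\sigma(p)_N)$ as close to $(w_1,\ldots,w_N)$ as desired. This shows $(w_1,\ldots,w_N)$ is in the closure of the truncated orbit, completing the proof.

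The main obstacle is the bookkeeping in the last paragraph: one must argue that the source indices selected in the different component-blocks can be chosen \emph{globally distinct} and in sufficient supply, which uses that each infinite component contributes infinitely many indices of $p$ landing in that component and nowhere else (so that, after removing the finitely many indices used for the $v_j$ and for other blocks, infinitely many remain), and that the density/genericity within each $V_k^{m}$ is unaffected by deleting finitely many points — this is where \Cref{lem:diagonalthin} and \Cref{cor:proddense} do the real work. A secondary subtlety is making precise that closedness in $\A^n_\infty$ is detected on finitely many coordinates, i.e. that the Zariski closure of a set of sequences equals the inverse limit of the Zariski closures of its truncations; this follows since $S$ is a union of the finitely-generated subrings $K[x_{ij}:i\le N]$ and an ideal of $S$ is determined by its intersections with these.
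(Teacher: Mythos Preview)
Your proposal is correct and follows essentially the same approach as the paper: one inclusion is \Cref{lem:orbitclossubset}, and for the other you reduce to finitely many coordinates and use the density arguments from \Cref{cor:proddense} and \Cref{lem:diagonalthin}. The only organizational difference is that the paper lumps all infinite components into $U=\bigcup_i V_i$ and runs a single density argument in $U(K)^{l-l'}$ (together with \Cref{lem:opendensedense}), whereas you partition the positions by component and then combine; both work, though the paper's version avoids the cross-block bookkeeping you flag at the end. One small correction: your parenthetical attempt to argue that $\nu(p,v_j)$ is always finite is wrong (e.g.\ $p=(0,1,0,1,\ldots)$ in $\A^1$ has two isolated components each hit infinitely often), but as you eventually note this is irrelevant since the constraint $\nu(w,v_j)\le\nu(p,v_j)$ is simply vacuous when the right-hand side is infinite.
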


\begin{proof}
 The claim is equivalent to $X(K)=\mathcal{W}_p$ defined in \Cref{lem:orbitclossubset} and $X(K)\subset \mathcal{W}_p$ is the statement of \Cref{lem:orbitclossubset} (for $K=L$).
 
 Let $w\in \mathcal{W}_p$, this amounts to $w=(w_i)_{i\in\N}\subset V_p(K)$ such that for all $1\leq j\leq s$ we have $\nu(w,v_j)\leq \nu(p,v_j)$. We have to show that $w\in X(K)$. Let $f\in S$ be a polynomial that vanishes on $X$. There is a natural number $l$ such that $f\in K[x_{ij}:\, i\in[l],\,j\in[n]]$. We need to show that $f(w)=0$. Without loss of generality assume that $w_1,\ldots,w_{l'}\in \{v_1,\ldots,v_s\}$ and $w_{l'+1},\ldots,w_{l}\in U(K)$ where $U:=\cup_{i=1}^r V_i$ for some suitable $l'\leq l$. The set $A=\{p_1,p_2,\ldots\}\cap U(K)$ is dense in $U$. Thus $A^{l-l'}$ is dense in $U(K)^{l-l'}$ by \Cref{cor:proddense}. Moreover, since every irreducible component of $U$ has infinitely many $K$-rational points, the open subset $$B=\{(u_{l'+1},\ldots,u_l)\in U(K)^{l-l'}:\, u_i\neq u_j \textrm{ for all }i\neq j\}$$is dense in $U(K)^{l-l'}$ by \Cref{lem:diagonalthin}. \Cref{lem:opendensedense} implies that $B\cap A^{l-l'}$ is dense in $U(K)^{l-l'}$. Note that each element of $B\cap A^{l-l'}$ is a tuple $(p_{i_{l'+1}},\ldots,p_{i_l})$ with $i_{l'+1},\ldots,i_l\in\N$ pairwise distinct and all $p_{i_j}\in U$. Thus since $f$ vanishes on $X$, it vanishes on  $$\{w_1\}\times\cdots\times\{w_{l'}\}\times(B\cap A^{l-l'}).$$ By the above shown density $f$ therefore vanishes on $\{w_1\}\times\cdots\times\{w_{l'}\}\times U(K)^{l-l'}$. In particular $f$ vanishes on $(w_1,\ldots,w_l)$ and thus on $w$.
\end{proof}

\begin{rem}\label{rem:idealoforbit}
 Let $p\in\A^n_{\infty}(K)$ and $X$ its orbit closure. The proofs of \Cref{lem:orbitclossubset} and \Cref{prop:orbitclos} yield an explicit ideal $I\subset S$ whose zero set is $X$. Indeed, consider the decomposition $$V_p=V_1\cup\cdots\cup V_r\subset\A_K^n$$ into irreducible components. Assume that $f_1,\ldots,f_a\in K[x_1,\ldots,x_n]$ generate the vanishing ideal of $V_p$ and for $k=1,\ldots,r$ let $g_{k1},\ldots,g_{km_k}$ generate the vanishing ideal of $\cup_{j\neq k} V_j$. Further, for any polynomial $h\in K[x_1,\ldots,x_n]$ let $h^{i}=h(x_{i1},\ldots,x_{in})\in S$. Then we can choose $I$ to be the ideal generated by all $f_l^i$ and the $G$-orbit of all $$P_{kj}=g^1_{kj}\cdots g^{\nu(p,V_k(K))+1}_{kj}$$for all $k$ with $\nu(p,V_k(K))$ finite (which can only happen if $|V_k(K)|=1$).
\end{rem}

\begin{ex}
 Let $n=2$, $K=\C$ and $$p=((i-1,\textrm{sgn}(i-1))_{i\in\N}\in\A^2_\infty(\C)$$ where $\textrm{sgn}(k)\in\{-1,0,1\}$ is the sign of $k$. Let $X$ be the orbit closure of $p$. The irreducible components of $V_p$ are $\A^1_\C\times\{1\}$ and the singleton $\{(0,0)\}$. Thus the $\C$-rational points $X(\C)$ are all sequences $((a_i,b_i))_{i\in\N}\subset \C^2$ where all but at most one of the $b_i$ are equal to $1$ such that if $b_k\neq 1$, then $a_k=b_k=0$. Further $X$ is the zero set of the ideal of $S$ generated by all $x_{i2}(x_{i2}-1), x_{i1}(x_{i2}-1)$ for $i\in\N$ and $(x_{i2}-1)(x_{j2}-1)$ for $i\neq j$.
\end{ex}

\subsection{Fixed points in $\A^n_\infty$}
Next we want to describe the fixed points of $\A^n_{K,\infty}$ under the action of $G$. We first explain an easy construction. 

\begin{lem}\label{lem:prodpoint}
 Consider a prime ideal $\fp\subset K[x_1,\ldots,x_n]$. Let $\fP\subset S$ be the ideal generated by all $f(x_{i1},\ldots,x_{in})$ for some $f\in\fp$ and $i\in\N$. Then $\fP$ is a prime ideal invariant under the action of $G$.
\end{lem}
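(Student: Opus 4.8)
The plan is to exhibit $\fP$ as the kernel of an explicit $G$-equivariant ring homomorphism whose target is a domain; since the kernel of any homomorphism to a domain is prime, and $G$-equivariance of the map forces $G$-invariance of the kernel, this settles both assertions at once. Concretely, write $R = K[x_1,\ldots,x_n]$ and let $D = \operatorname{Frac}(R/\fp)$ (or even just the domain $R/\fp$ itself — a domain suffices). Let $D_\infty = D \otimes_K D \otimes_K \cdots$ be the countably infinite tensor power of $D$ over $K$, which is again a domain because a (possibly infinite) tensor product of domains over a field is a domain (any element lies in a finite sub-tensor-product, and a finite tensor product of domains over a field is a domain — this uses that $K$ is a field, so the $D^{\otimes k}$ are flat $K$-algebras and one reduces to the classical fact, e.g. via generic flatness or directly). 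The symmetric group $G$ acts on $D_\infty$ by permuting the tensor factors. Now define $\psi\colon S \to D_\infty$ by sending $x_{ij}$ to the image of $x_j$ in the $i$-th tensor factor $D$. This is a well-defined $K$-algebra homomorphism, and it is visibly $G$-equivariant: $\psi(\sigma(x_{ij})) = \psi(x_{\sigma(i)j}) = $ the class of $x_j$ in the $\sigma(i)$-th factor $= \sigma(\psi(x_{ij}))$.

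Next I would identify $\ker(\psi)$ with $\fP$. The inclusion $\fP \subseteq \ker(\psi)$ is immediate: for $f \in \fp$, the element $f(x_{i1},\ldots,x_{in})$ maps under $\psi$ to the class of $f$ in the $i$-th factor $R/\fp \hookrightarrow D$, which is $0$. For the reverse inclusion $\ker(\psi) \subseteq \fP$, I would pick $h \in \ker(\psi)$, note that $h$ involves only finitely many variable-blocks, say those with first index in $[m]$, and reduce modulo $\fP$: the ring $S/\fP$ is naturally identified with $(R/\fp)[x_{ij}: i > m,\, j\in[n]] \otimes_K \cdots$; more cleanly, $S/\fP \cong \bigotimes_{i\in\N}(R/\fp)$ with $x_{ij} \mapsto$ class of $x_j$ in factor $i$, because $\fP$ is exactly the ideal generated by the relations imposing $\fp$ in each block and these relations involve disjoint sets of variables. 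Under this identification $S/\fP$ is a sub-$K$-algebra of $D_\infty = \bigotimes_i D$ (each factor $R/\fp$ embeds in $D$), and the induced map $S/\fP \to D_\infty$ is injective. Hence $\ker(\psi) = \fP$, so $\fP$ is prime, and being the kernel of a $G$-equivariant map it is $G$-invariant. Alternatively, and perhaps more self-containedly, one can argue directly that $S/\fP \cong \bigotimes_{i\in\N}(R/\fp)$ is a domain: an infinite tensor power over $K$ of the domain $R/\fp$ is a domain, and $G$-invariance of $\fP$ is obvious from its generating set since $G$ permutes the set of generators $\{f(x_{i1},\ldots,x_{in}): f\in\fp,\, i\in\N\}$ among themselves.

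The only genuine content, and thus the main obstacle, is the claim that an infinite tensor product $\bigotimes_{i\in\N} D$ of a domain $D$ over the field $K$ is again a domain (equivalently, that $S/\fP \cong \bigotimes_i (R/\fp)$ has no zero divisors). This follows by a direct-limit argument: $\bigotimes_{i\in\N} D = \varinjlim_{k} D^{\otimes k}$ is a filtered colimit of the finite tensor powers, so any purported product $ab = 0$ of nonzero elements already occurs in some $D^{\otimes k}$, reducing us to the finite case; and a finite tensor product $D^{\otimes k}$ of a domain over a field $K$ is a domain because it embeds in $\operatorname{Frac}(D)^{\otimes_K k}$ (flatness of field extensions), and a tensor product of field extensions of $K$ need not be a domain in general — so here one must instead invoke that $D = R/\fp$ is a \emph{finitely generated} $K$-algebra domain, whence $D^{\otimes k}$ is the coordinate ring of a product of integral affine $K$-schemes of finite type, which is integral since $K$ is algebraically closed (here is where the standing hypothesis that $K$ is algebraically closed enters). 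I would isolate this as the one lemma requiring care; everything else — the equivariance of $\psi$, the containment $\fP\subseteq\ker\psi$, and the identification $S/\fP\cong\bigotimes_i D$ via disjointness of variables — is routine.
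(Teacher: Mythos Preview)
Your proposal is correct and follows essentially the same route as the paper: both reduce primality of $\fP$ to the fact that the finite tensor powers $(R/\fp)^{\otimes k}$ over the algebraically closed field $K$ are domains (the paper cites this as \cite[Lemma~1.54]{iitaka}). The only difference is packaging---the paper argues directly that any product $gh\in\fP$ already lies in a finite truncation $\fP_k\subset K[x_{ij}:i\in[k],j\in[n]]$, which is prime, whereas you build the global homomorphism to the infinite tensor product first and then invoke the same finite reduction via the filtered colimit; your explicit observation that algebraic closedness of $K$ is what makes the finite case work is a good point that the paper leaves to the cited reference.
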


\begin{proof}
 It is clear that the ideal $\fP$ is invariant under the action of $G$. It thus remains to show that $\fP$ is a prime ideal. To this end, let $g,h\in S$ such that $g\cdot h\in \fP$. By definition there is a natural number $k\in\N$ such that $g\cdot h$ is the ideal $\fP_k$ of $K[x_{ij}:\, i\in[k],\,j\in[n]]$ that is generated by all $f(x_{i1},\ldots,x_{in})$ with $f\in\fp$ and $i\in[k]$. But the ideals $\fP_k$ are all prime ideals, see for example \cite[Lemma~1.54]{iitaka}. Therefore, one of the factors $g,h$ is in $\fP_k$ and thus in $\fP$.
\end{proof}

\begin{Def}
 For any $y=\fp\in\A^n_K$ we define $y_\infty:=\fP\in\A^n_{K,\infty}$ to be the fixed point constructed from $\fp$ as in \Cref{lem:prodpoint}.
\end{Def}

We observe that $y_\infty$ naturally appears as generic point of a certain orbit closure.

\begin{lem}\label{lem:orbitclosprime}
 Let $p\in\A^n_{K,\infty}(K)$ and assume that $V_p$ is irreducible with generic point $y\in\A^n_K$. Then the orbit closure of $p$ is irreducible with generic point $y_\infty$.
\end{lem}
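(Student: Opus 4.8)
The plan is to identify the orbit closure of $p$ with the subscheme $Y$ cut out by the ideal $\fP$ from \Cref{lem:prodpoint} applied to the prime $\fp$ corresponding to $y$, i.e. to show $X = \Spec(S/\fP)$ where $X$ is the orbit closure of $p$. Since $V_p$ is irreducible, $\fP$ is prime by \Cref{lem:prodpoint}, so $Y=V(\fP)$ is integral with generic point $\fP=y_\infty$; thus it suffices to prove $X=Y$ as closed subsets of $\A^n_\infty$, and by \Cref{lem:langcount}(3) it is enough to compare $K$-rational points, i.e. to show $X(K)=Y(K)$.

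First I would check $X\subseteq Y$. The scheme $Y$ is closed and $G$-invariant by \Cref{lem:prodpoint}, and $p\in Y(K)$: indeed, every generator $f(x_{i1},\dots,x_{in})$ of $\fP$ with $f\in\fp$ vanishes at $p=(p_i)_i$ because each $p_i\in V_p(K)=V(\fp)(K)$. Hence the orbit $Gp$ lies in $Y$, and therefore so does its closure $X$. This direction is routine.

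The reverse inclusion $Y(K)\subseteq X(K)$ is the crux. A point $w=(w_i)_{i\in\N}\in Y(K)$ is exactly a sequence with all $w_i\in V_p(K)$. Fix a polynomial $h\in S$ vanishing on $X$; choose $l$ with $h\in K[x_{ij}:i\in[l],j\in[n]]$. I must show $h(w)=0$, for which it suffices to show $h$ vanishes on $(w_1,\dots,w_l)\in V_p(K)^l$. Now $V_p$ is irreducible, so it has no finite components, and the argument from the proof of \Cref{prop:orbitclos} applies directly with $r=1$, $s=0$: the set $A=\{p_1,p_2,\dots\}$ is dense in $V_p(K)$, so $A^l$ is dense in $V_p(K)^l$ by \Cref{cor:proddense}; the set $B$ of $l$-tuples with pairwise distinct entries is open and dense in $V_p(K)^l$ by \Cref{lem:diagonalthin} (using that $V_p(K)$ is infinite, which holds since $V_p=V_p(K)=\{p_i\}$'s closure is positive-dimensional or at least has infinitely many points — note $p$ cannot be eventually constant if $V_p$ is irreducible of dimension $\geq 1$; if $V_p$ were a single point then $\fp$ is maximal and the statement is trivial, so assume otherwise). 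By \Cref{lem:opendensedense}, $B\cap A^l$ is dense in $V_p(K)^l$. Each element of $B\cap A^l$ has the form $(p_{i_1},\dots,p_{i_l})$ with the $i_j$ pairwise distinct, hence lies in the orbit $Gp$, so $h$ vanishes on it; by density $h$ vanishes on all of $V_p(K)^l$, in particular at $(w_1,\dots,w_l)$, giving $h(w)=0$.

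The main obstacle is the degenerate case analysis: one must be careful that $V_p(K)$ is infinite so that \Cref{lem:diagonalthin} and \Cref{cor:proddense} apply. If $V_p$ consists of a single closed point, then $\fp$ is a maximal ideal, $p$ is the constant sequence at that point, $Gp=\{p\}$ is already closed and irreducible, and $y_\infty=\fP$ is easily seen to be its (unique, generic) point, so the statement holds trivially. In all other cases $V_p$ irreducible forces $\dim V_p\geq 1$ and hence $V_p(K)$ infinite since $K$ is algebraically closed, and the density argument above goes through verbatim. Once $X(K)=Y(K)$ is established, \Cref{lem:langcount}(3) gives $X=Y=V(\fP)$ as reduced closed subschemes, and since $\fP$ is prime this is integral with generic point $\fP=y_\infty$, which is visibly a fixed point of $G$; this completes the proof.
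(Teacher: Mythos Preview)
Your proof is correct and follows essentially the same approach as the paper. The only difference is that the paper's proof is a three-line application of \Cref{prop:orbitclos}: since $V_p$ is irreducible, that theorem immediately gives $X(K)=\{$all sequences in $V_p(K)\}$, which is exactly the zero set of $y_\infty$, and then \Cref{lem:langcount} finishes. You instead re-derive this special case of \Cref{prop:orbitclos} by hand (re-invoking \Cref{cor:proddense}, \Cref{lem:diagonalthin}, \Cref{lem:opendensedense}), and separately treat the case where $V_p$ is a single point; this is correct but redundant, since \Cref{prop:orbitclos} already covers that case as well (the single point then appears infinitely often in $p$, so the ``at most as often as in $p$'' constraint is vacuous).
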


\begin{proof}
 Let $X$ be the orbit closure of $p$. Then by \Cref{prop:orbitclos} the set $X(K)$ consists of all sequences in $V_p(K)$. Thus by definition the zero set of the prime ideal $y_\infty$ in $\A^n_{K,\infty}(K)$ coincides with $X(K)$. Since $X(K)$ is dense in $X$, the ideal of $X$ is thus $y_\infty=\sqrt{y_\infty}$ by \Cref{lem:langcount}.
\end{proof}

We further observe that if $K\subset L$ is a field extension, then the action of $G$ on $\A^n_{K,\infty}$ induces an action of $G$ on $\A^n_{L,\infty}=\A^n_{K,\infty}\times_K\Spec(L)$ by letting $G$ act on $\Spec(L)$ trivially. The natural morphism of affine schemes $\pi:\A^n_{L,\infty}\to \A^n_{K,\infty}$ is $G$-equivariant. Thus if $x\in \A^n_{L,\infty}$ is a fixed point of $\A^n_{L,\infty}$, then $\pi(x)$ is a fixed point of $\A^n_{K,\infty}$. In particular, for any $y\in\A^n_L$ we obtain the fixed point $\pi(y_\infty)$ of $\A^n_{K,\infty}$.
This gives a convenient way of constructing fixed points in $\A^n_{K,\infty}$. Our next goal is to show that every fixed point of $\A^n_{K,\infty}$ arises in that way.

\begin{lem}\label{lem:runterschneiden}
 Let $x\in\A^n_{K,\infty}$ a fixed point. There is an algebraically closed field extension $K\subset L$ and a closed point $y\in\A^n_{L,\infty}$ such that $\pi(\sigma(y))=x$ for all $\sigma\in G$ where $\pi:\A^n_{L,\infty}\to\A^n_{K,\infty}$ is the natural projection.
\end{lem}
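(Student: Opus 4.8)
The plan is to realize the $G$-invariant prime ideal $x=\fp\subset S$ as the contraction of a single $L$-rational point after a well-chosen base change, and then invoke the $G$-equivariance of $\pi$ to dispose of all $\sigma\in G$ simultaneously.

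First I would set $R=S/\fp$. Since the prime ideal $\fp$ is proper, $R$ is an integral domain, and since $K$ is a field we have $\fp\cap K=(0)$, so $K$ embeds into $R$. Let $F=\operatorname{Frac}(R)$ and let $L=\overline{F}$ be an algebraic closure; then $K\subset L$ is an extension of algebraically closed fields (and $L$ has uncountably many elements, since $K$ does). The composition $\phi\colon S\twoheadrightarrow R\hookrightarrow F\hookrightarrow L$ is a homomorphism of $K$-algebras whose kernel is exactly $\fp$, the surjection $S\twoheadrightarrow R$ being the only non-injective map in the chain.

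Next I would base change. By the universal property of $\otimes_K$, the pair $(\phi,\mathrm{id}_L)$ extends to an $L$-algebra homomorphism $\tilde\phi\colon S\otimes_K L\to L$, which is surjective because it restricts to the identity on $1\otimes L$. Hence $\fm:=\ker\tilde\phi$ is a maximal ideal of $S\otimes_K L$, so $y:=\fm$ is a closed point of $\A^n_{L,\infty}$; concretely it is the $L$-rational point $(y_i)_{i\in\N}$ with $y_i=(\phi(x_{i1}),\dots,\phi(x_{in}))\in L^n$. By construction $\pi(y)$ is the contraction of $\fm$ along $S\to S\otimes_K L$, namely $\ker\bigl(S\to S\otimes_K L\xrightarrow{\tilde\phi}L\bigr)=\ker\phi=\fp=x$.

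Finally, for every $\sigma\in G$, the $G$-equivariance of $\pi$ noted just above the statement together with the fact that $x$ is a fixed point gives $\pi(\sigma(y))=\sigma(\pi(y))=\sigma(x)=x$, as required. I do not expect a serious obstacle here: the only step needing a moment's care is that $y$ is a \emph{closed} point of $\A^n_{L,\infty}$, which is automatic since $\fm$ is maximal by construction (alternatively one may cite \Cref{lem:langcount}(2)); the remainder is just an unwinding of the definitions of base change and of the induced $G$-actions.
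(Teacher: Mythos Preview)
Your argument is correct and is essentially the same as the paper's: both take $L$ to be an algebraic closure of the residue field $\kappa(x)=\operatorname{Frac}(S/\fp)$, produce the corresponding $L$-rational point of $\A^n_{K,\infty}$, lift it to a closed point of $\A^n_{L,\infty}$ via the universal property of the tensor product, and then conclude by $G$-equivariance of $\pi$. Your verification that $y$ is closed (via surjectivity of $\tilde\phi$) is slightly more direct than the paper's appeal to \Cref{lem:langcount}(2), but otherwise the two proofs coincide.
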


\begin{proof}
 Since $\pi$ is $G$-equivariant, it suffices to find $L$ and a closed point $y\in\A^n_{L,\infty}$ such that $\pi(y)=x$. Let $\kappa(x)$ be the residue field of $x$ and $\kappa(x)\to L$ any homomorphism to an algebraically closed field $L$. This gives a point $x'\in \A^n_{K,\infty}(L)$ which is mapped to $x$ by the map $\A^n_{K,\infty}(L)\to\A^n_{K,\infty}$. Now the claim follows from the fact that the diagram \[\begin{tikzcd}
    \A^n_{L,\infty}(L)\arrow{r} \arrow{d} & \A^n_{L,\infty}\arrow{d} \\
   \A^n_{K,\infty}(L) \arrow{r} & \A^n_{K,\infty} 
  \end{tikzcd}\]  commutes. Note that the left map is bijective and the upper map has its image in the set of closed points of $\A^n_{L,\infty}$.
\end{proof}

\Cref{lem:runterschneiden} enables us to apply \Cref{prop:orbitclos} which can be used to describe the orbit closure of the closed point $y$.

\begin{thm}\label{thm:symprime}
 Let $x\in\A^n_{K,\infty}$ be a fixed point. There is an algebraically closed field extension $K\subset L$ and a point $y\in\A^n_L$ such that $x=\pi(y_\infty)$ where $\pi:\A^n_{L,\infty}\to \A^n_{K,\infty}$ is the natural projection.
\end{thm}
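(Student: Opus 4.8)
The plan is to combine \Cref{lem:runterschneiden} with \Cref{lem:orbitclosprime}. By \Cref{lem:runterschneiden}, there is an algebraically closed field extension $K\subset L$ and a closed point $y'\in\A^n_{L,\infty}$ with $\pi(\sigma(y'))=x$ for all $\sigma\in G$; equivalently, $\pi$ maps the whole orbit $Gy'$ to the single point $x$. Writing $y'=(p_i)_{i\in\N}\in\A^n_{L,\infty}(L)$ (using that closed points of the countable-type scheme $\A^n_{L,\infty}$ are exactly the $L$-rational ones by \Cref{lem:langcount}), we may consider its orbit closure $X'\subset\A^n_{L,\infty}$ and the image of the orbit under $\pi$.

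\textbf{Step 1: Reduce to the irreducible case.} The point $V_{y'}\subset\A^n_L$ may be reducible, so $y'$ need not be of the form needed directly. I would argue that, since $\pi$ collapses $Gy'$ to the single point $x$, the orbit closure $\overline{Gy'}$ must map to the irreducible closed set $\overline{\{x\}}$; hence the (unique) generic point $\eta$ of $\overline{Gy'}$ satisfies $\pi(\eta)=x$. But here is the key point: by \Cref{prop:orbitclos}, the orbit closure of $y'$ need not be irreducible in general (if some $v_j$ appears only finitely often, the orbit closure decomposes). So I would instead replace $y'$ by a better closed point. Concretely, keeping only those $p_i$ that lie in a single irreducible component: choose an irreducible component $V$ of $V_{y'}$ and let $y''=(q_i)_{i\in\N}$ be a sequence in $V(L)$ whose closure is all of $V$ (possible since $V(L)$ is infinite when it is positive-dimensional, and when $V$ is a point we just take the constant sequence). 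Then $V_{y''}=V$ is irreducible. The issue is ensuring $\pi(y'')=x$ as well; this requires knowing that $x$ already "sees" only the generic behaviour coming from $V$.

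\textbf{Step 2: Identify $x$ as a $\pi$-image of a product point.} Once we have a closed point $y''\in\A^n_{L,\infty}(L)$ with $V_{y''}$ irreducible and $\pi(\overline{Gy''})=\overline{\{x\}}$, \Cref{lem:orbitclosprime} tells us $\overline{Gy''}$ is irreducible with generic point $y_\infty$, where $y\in\A^n_L$ is the generic point of $V_{y''}$. Since $\pi$ is continuous and $G$-equivariant, $\pi$ maps the generic point $y_\infty$ of $\overline{Gy''}$ to the generic point of the irreducible set $\pi(\overline{Gy''})=\overline{\{x\}}$, which is $x$ itself. This gives $x=\pi(y_\infty)$, as desired.

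\textbf{Main obstacle.} The delicate part is Step 1: passing from the possibly-reducible closed point $y'$ supplied by \Cref{lem:runterschneiden} to one supported on an irreducible subvariety while preserving $\pi(\,\cdot\,)=x$. One clean way around this: since $x$ is a fixed point, the closed invariant set $\overline{\{x\}}$ pulls back under $\pi$ to a closed invariant subset of $\A^n_{L,\infty}$ containing $y'$, hence containing $\overline{Gy'}$; take an irreducible component $Z$ of $\pi^{-1}(\overline{\{x\}})$ whose image is dense in $\overline{\{x\}}$ and which is $G$-invariant (the minimal primes over a $G$-invariant ideal are permuted by $G$, and one can pass to the intersection of the $G$-orbit of components, or argue that some component is itself invariant because $\overline{\{x\}}$ is irreducible and $\pi$ is, on each fibre, well-behaved). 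Then a generic closed point $y''$ of $Z$ has irreducible orbit closure (being already contained in the irreducible $Z$, whose generic point one checks is of the form $y_\infty$ by \Cref{lem:orbitclosprime} applied to $y''$), and $\pi(y'')$ lies in $\overline{\{x\}}$ with $\pi(\overline{Gy''})$ dense in $\overline{\{x\}}$, forcing the generic point of $\overline{Gy''}$ to map to $x$. I expect this irreducibility bookkeeping — making sure the chosen component is $G$-invariant and dominates $\overline{\{x\}}$ — to be where the real work lies; the rest is a formal consequence of the lemmas already established.
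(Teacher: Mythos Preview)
Your overall scaffolding is right: use \Cref{lem:runterschneiden} to get a closed point $y'\in\A^n_{L,\infty}(L)$ over $x$, then reduce to the case where $V_{y'}$ is irreducible so that \Cref{lem:orbitclosprime} applies. You also correctly locate the real difficulty in Step~1. But the reduction you sketch is not actually carried out, and neither of your suggested routes works as stated. In the first route you never justify why dropping to a single component $V\subset V_{y'}$ preserves $\fp=S\cap J$; in the second, the claims that an irreducible component $Z$ of $\pi^{-1}(\overline{\{x\}})$ can be chosen $G$-invariant, that a ``generic closed point'' of $Z$ has irreducible $V_{y''}$, and that the generic point of $Z$ is of the form $y_\infty$ are all unproved (the last is essentially the theorem itself).

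The paper's argument supplies the missing mechanism, and it is algebraic rather than topological. Write $\fp=S\cap J$ where $J\subset S_L$ is the vanishing ideal of the orbit closure of $y'$, described via \Cref{prop:orbitclos}. To eliminate an isolated point $v_s$ occurring with multiplicity $m_s$, pass to the ideal $\tilde{J}\supset J$ allowing $v_s$ at most $m_s-1$ times; for $f\in\tilde{J}\cap S$ using variables $\underline{x}_1,\ldots,\underline{x}_k$, the product $\prod_{\sigma\in\Sym(k+1)} f(\underline{x}_{\sigma(1)},\ldots,\underline{x}_{\sigma(k)})$ vanishes on $Z(L)$ (some factor always avoids the offending occurrence of $v_s$), hence lies in $\fp$; primality and $G$-invariance of $\fp$ then force $f\in\fp$. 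Once $s=0$, a pigeonhole variant handles $r>1$: if $f_l\in I_l\cap S$ witnesses $\fp\subsetneq I_l\cap S$ for each component $V_l$, then the analogous product over $\Sym(rk)$ of all the $f_l$'s lies in $\fp$, contradicting primality. This product-plus-primality-plus-invariance trick is the key idea you are missing; the ``irreducibility bookkeeping'' you anticipate cannot be done by soft component-chasing alone.
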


\begin{proof}
 Let $S_L=S\otimes_K L$. The fixed point $x$ is a prime ideal $\fp$ of $S$ and by \Cref{lem:runterschneiden} there is a maximal ideal $\fm$ of $S_L$ with $\fp=S\cap \bigcap_{\sigma\in G} \sigma(\fm)$. The ideal $J=\bigcap_{\sigma\in G} \sigma(\fm)$ is the vanishing ideal of the orbit closure $Z$ of the closed point $\fm$. Thus \Cref{prop:orbitclos} shows that there are integral subschemes $V_1,\ldots,V_r\subset \A_L^n$, closed points $v_1,\ldots,v_s\in \A_L^n$ and natural numbers $m_1,\ldots,m_s$ such that the set $Z(L)\subset\A^n_{L,\infty}(L)$ consists of all sequences in $$V(L)=V_1(L)\cup \cdots\cup V_r(L)\cup\{v_1\}\cup\cdots\cup\{v_s\}$$ where each $v_i$ appears at most $m_i$ times. We are done if $r=1$ and $s=0$. Indeed, in that case we can choose $y\in\A^n_L$ to be the generic point of $V_1$ and $J$ is the prime ideal $y_\infty$ by \Cref{lem:orbitclosprime}.
 
 If $s>0$, then let $\tilde{Z}$ be the set of all sequences in $V$ where $v_i$ appears at most $m_i$ times for $i=1,\ldots,s-1$ and $v_s$ appears at most $m_s-1$ times. Furthermore, let $\tilde{J}\subset S_L$ be the vanishing ideal of $\tilde{Z}$. Since $\tilde{Z}\subset Z(L)$ we have $J\subset\tilde{J}$ and thus $\fp\subset\tilde{J}\cap S$. We claim that $\fp=\tilde{J}\cap S$. Indeed, let $f\in\tilde{J}\cap S$. There is a natural number $k\geq m_s$ such that $f\in K[x_{ij}:\, i\in[k],\,j\in[n]]$. Now from any sequence which is an element of $Z(L)$ we can drop one element so that it becomes a sequence in $\tilde{Z}$. This implies that the product $$\prod_{\sigma\in\Sym(k+1)}f(\underline{x}_{\sigma(1)},\ldots,\underline{x}_{\sigma(k)}),\,\textrm{ where } \underline{x}_{i}:=(x_{i1},\ldots,x_{in}),$$ vanishes on $Z(L)$ and thus lies in $\fp$. Since $\fp$ is prime, one of the factors must lie in $\fp$. Since $\fp$ is invariant, also $f$ lies in $\fp$. Thus we can replace $J$ by $\tilde{J}$. By iterating this process, we can arrive at $s=0$.
 
 We make a similar argument for reducing to the case $r=1$. If $r>1$, we consider the sets $Z_l$ of sequences in $V_l(L)$ for $l=1,\ldots,r$. Since $s=0$, the set $Z$ itself consists of all sequences in $V(L)$. Let $I_l$ be the vanishing ideal of $Z_l$ in $S_L$ for all $l=1,\ldots,r$. We show that $\fp=I_l\cap S$ for a suitable $l$. Assume for the sake of a contradiction that for each $l=1,\ldots,r$ there are $f_l\in I_l\cap S$ with $f_l\not\in\fp$. There is a natural number $k$ such that $f_1,\ldots,f_r\in K[x_{ij}:\, i\in[k],\,j\in[n]]$. By the pigeonhole principle every set of $r\cdot k$ elements from $V(L)$ has a subset of $k$ elements from one the $V_l$. Thus the product $$\prod_{l=1}^r\prod_{\sigma\in\Sym(r\cdot k)} f_l(\underline{x}_{\sigma(1)},\ldots,\underline{x}_{\sigma(k)}),\,\textrm{ where } \underline{x}_{i}:=(x_{i1},\ldots,x_{in}),$$ vanishes on $Z(L)$ and thus lies in $\fp$. Again since $\fp$ is prime, one of the factors must lie in $\fp$ and since $\fp$ is invariant one of the $f_l$ lies in $\fp$. This is a contradiction to our assumption and therefore shows that $\fp=I_l\cap S$ for a suitable $l$. Thus we can replace $I$ by $I_l$ which completes the proof.
\end{proof}

\begin{ex}\label{ex:n1}
 We characterize the fixed points in $\A^1_{K,\infty}$. To this end let $K\subset L$ an algebraically closed field extension and $y\in\A^1_L$. There are the following possibilities:
 \begin{enumerate}
  \item If $y=(0)$, then $\pi(y_\infty)=(0)$ is the generic point of $\A^1_{K,\infty}$.
  \item If $y=(x-\alpha)$ for some $\alpha\in K$, then $\pi(y_\infty)=(x_i-\alpha:\,i\in\N)$ is the constant sequence $\overline{\alpha}=(\alpha)_{i\in \N}$.
  \item If $y=(x-\alpha)$ for $\alpha$ transcendental over $K$, then $\pi(y_\infty)=(x_i-x_j:\,i,j\in\N)$ is the closure $\Delta$ of the set of all constant sequences.
 \end{enumerate}
 Thus every maximal specialization chain of fixed points in $\A^1_{K,\infty}$ is of the form $$(0)\leadsto\Delta\leadsto\overline{\alpha}$$for some $\alpha\in K$.
\end{ex}

We now give a more geometric way of constructing fixed points in $\A_K^n$ which will also be useful for describing arbitrary orbit closures in the next section.

\begin{Def}
Let $X$ be a projective scheme over $K$ and let $\Sigma$ be a closed subscheme of $X\times\A^n_{K}$. For every $x\in X(K)$ the fiber $\pr_2(\pr_1^{-1}(x))$ is a closed subset of $\A_K^n(K)$. We further let $$\Seq(\Sigma)=\{(p_i)_{i\in\N}\subset\A_K^n(K):\,\exists x\in X(K):\forall i\in\N:\, p_i\in\pr_2(\pr_1^{-1}(x))\}.$$ This is a subset of $\A^n_{K,\infty}(K)$. Thus $\Seq(\Sigma)$ consists of all sequences in $\A_K^n(K)$ that are entirely contained in the set of $K$-rational points of one fiber of $\pr_1$. If $\Seq(\Sigma)$ is irreducible, then we denote by $\sq(\Sigma)\in\A^n_{K,\infty}$ the generic point of its closure.
\end{Def}

In the following, we will give a criterion on $\Sigma$ for $\Seq(\Sigma)$ to be irreducible.

\begin{lem}\label{lem:finitesubseq}
 Let $X$ be a projective scheme over $K$ and let $\Sigma$ be a closed subscheme of $X\times\A^n_{K}$. A sequence $(p_i)_{i\in\N}\subset\A_K^n(K)$ is in $\textnormal{Seq}(\Sigma)$ if and only if for every finite set $M\subset\N$ there exists $x\in X(K)$ such that $\{p_i:\, i\in M\}\subset \pr_2(\pr_1^{-1}(x))$.
\end{lem}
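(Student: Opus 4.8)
The statement is a compactness-type characterization: membership in $\Seq(\Sigma)$, which a priori requires a \emph{single} point $x\in X(K)$ working for the \emph{whole} sequence, is equivalent to the seemingly weaker condition that every finite subsequence is covered by some $x$. The forward direction is trivial: if $x\in X(K)$ witnesses $(p_i)_{i\in\N}\in\Seq(\Sigma)$, then the same $x$ witnesses the finite condition for every finite $M\subset\N$. So the content is the converse, and the natural tool is the quasi-compactness of the projective scheme $X$ over $K$ (or, on $K$-points, a Noetherian/constructibility argument, since $X$ is of finite type over $K$).

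For the converse, the plan is as follows. For each $i\in\N$ let $Y_i=\{x\in X(K):\, p_i\in\pr_2(\pr_1^{-1}(x))\}=\pr_1\big(\pr_2^{-1}(p_i)\cap\Sigma\big)$. Since $\Sigma$ is closed in $X\times\A^n_K$ and $\pr_1$ is a projective (hence closed) morphism, each $Y_i$ is a closed subset of $X(K)$ — here it is convenient to work with the closed subscheme $\Sigma\cap(X\times\{p_i\})$ of $X$ and take its image. A sequence $(p_i)$ lies in $\Seq(\Sigma)$ precisely when $\bigcap_{i\in\N}Y_i\neq\emptyset$, and the finite condition says exactly that $\bigcap_{i\in M}Y_i\neq\emptyset$ for every finite $M$. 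Thus the claim is that the closed sets $Y_i$ have the finite intersection property over $X(K)$, so their total intersection is nonempty. This is immediate from quasi-compactness of $X$: the complements $X(K)\setminus Y_i$ are open, and if $\bigcap_i Y_i=\emptyset$ they would cover $X(K)$, hence finitely many would cover it, contradicting the finite intersection property. (On the level of schemes: $X$ is quasi-compact, the $Y_i$ correspond to nonempty closed subschemes, and a descending-type argument via Noetherianity of $X$ — $X$ being of finite type over $K$ — gives that the intersection stabilizes and is nonempty.)

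The one point that needs a word of care is that the $Y_i$ must genuinely be \emph{closed}, not merely constructible, so that the quasi-compactness argument applies directly; this is why we use that $X$ is projective, so $\pr_1:X\times\A^n_K\to X$ is a closed map, whence $\pr_1(\Sigma\cap(X\times\{p_i\}))=Y_i$ is closed in $X$. I would also note that since $X$ is of finite type over the algebraically closed field $K$, a nonempty closed subset has a $K$-point, so the nonemptiness of the intersection as a scheme-theoretic set yields an actual $x\in X(K)$ in all the $Y_i$, as required by the definition of $\Seq(\Sigma)$.

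The main obstacle — really the only subtlety — is the properness/closedness input: without $X$ projective the images $Y_i$ would only be constructible and the finite-intersection argument could fail (the intersection of a decreasing chain of nonempty constructible sets can be empty, e.g. $\{t> i\}$ in $\A^1$). So the proof is essentially: reduce to closed sets $Y_i$ in the quasi-compact (indeed Noetherian) space $X$, invoke the finite intersection property, and extract a $K$-point. I expect the write-up to be short; the bulk is just making the identification $(p_i)\in\Seq(\Sigma)\iff\bigcap_i Y_i\neq\emptyset$ explicit and citing quasi-compactness of $X$.
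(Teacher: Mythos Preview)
Your approach is essentially identical to the paper's: both reduce to the finite intersection property for the closed sets $Y_i=\pr_1(\pr_2^{-1}(p_i))$ in the quasi-compact space $X(K)$, arguing (by contrapositive) that if $\bigcap_{i\in\N}Y_i=\emptyset$ then already some finite subintersection is empty.

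One small correction worth making: the closedness of $Y_i$ does \emph{not} come from projectivity of $X$, and your claim that ``$\pr_1:X\times\A^n_K\to X$ is a closed map'' is false (that would require $\A^n_K$ to be proper). The correct reason is the one you in fact gave first: $\pr_2^{-1}(p_i)=\Sigma\cap(X\times\{p_i\})$ is closed in $X\times\{p_i\}\cong X$, and $\pr_1$ restricted to this slice is an isomorphism. Projectivity of $X$ is not used in this lemma at all (any scheme of finite type over $K$ is Noetherian, hence quasi-compact); it is needed only in the subsequent lemma where one projects the fiber products $\Sigma_k$ down to $(\A^n_K)^k$ and invokes elimination theory.
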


\begin{proof}
 One direction of the claim is clear. Thus let $(p_i)_{i\in\N}\subset\A_K^n(K)$ not in $\textnormal{Seq}(\Sigma)$. This means that $\cap_{i\in\N}\textnormal{pr}_1(\textnormal{pr}_2^{-1}(p_i))=\emptyset$. But since $X(K)$ is quasi-compact, there is a finite set $M\subset\N$ such that $\cap_{i\in M}\textnormal{pr}_1(\textnormal{pr}_2^{-1}(p_i))=\emptyset$.
\end{proof}

\begin{lem}\label{prop:seqclosed}
 Let $X$ be a projective scheme over $K$, let $\Sigma$ be a closed subscheme of $X\times\A^n_{K}$ and let $I$ be the vanishing ideal of $\Seq(\Sigma)$. Let $k\in\N$ and $$I_k=I\cap K[x_{ij}:\,i\in[k],j\in[n]].$$ Furthermore, consider the $k$-fold fiber product $$\Sigma_k=\Sigma\times_{X}\cdots\times_{X}\Sigma\subset X\times(\A^n_K)^k $$ of $\Sigma$ over $X$. Then:
 \begin{enumerate}
     \item $I_k$ is the vanishing ideal of $\pr_2(\Sigma_k)\subset(\A^n_K)^k$.
     \item The zero set of $I_k$ is $\pr_2(\Sigma_k)\subset(\A^n_K)^k$.
     \item The zero set of $I$ in $\A^n_{K,\infty}(K)$ is $\Seq(\Sigma)$.
     \item $I$ is invariant under $G$.
     \item If $\Sigma_k$ is irreducible for all $k$, then $I$ is a prime ideal.
 \end{enumerate}
\end{lem}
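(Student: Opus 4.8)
The plan is to establish the five assertions in the order (4), (1), (2), (3), (5), since each one uses the previous ones. Assertion (4) is immediate: the defining property of $\Seq(\Sigma)$ --- that every term of the sequence lies in one common fibre $\pr_2(\pr_1^{-1}(x))$ --- is unchanged under permuting the indices, so $\Seq(\Sigma)\subset\A^n_{K,\infty}(K)$ is $G$-invariant, and hence so is its vanishing ideal $I$.

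For (1) I would prove the two inclusions by hand. First note that $\Sigma_k$ is closed in $X\times(\A^n_K)^k$ (it is the intersection of the preimages of $\Sigma$ under the $k$ partial projections $X\times(\A^n_K)^k\to X\times\A^n_K$); since $X$ is projective over $K$, the projection $\pr_2\colon X\times(\A^n_K)^k\to(\A^n_K)^k$ is a closed map, so $\pr_2(\Sigma_k)$ is a closed subset of $(\A^n_K)^k$, and because $K$ is algebraically closed its set of $K$-points is exactly the set of tuples $(p_1,\dots,p_k)$ all of whose entries lie in a single fibre of $\pr_1$. Now take $f\in I$ involving only the variables of the first $k$ index blocks, together with such a tuple $(p_1,\dots,p_k)$; padding it to the sequence $(p_1,\dots,p_k,p_1,p_1,\dots)\in\Seq(\Sigma)$, on which $f$ vanishes, gives $f(p_1,\dots,p_k)=0$, so $f$ lies in the vanishing ideal of $\pr_2(\Sigma_k)$. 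Conversely, a polynomial in $K[x_{ij}:i\in[k],j\in[n]]$ that vanishes on $\pr_2(\Sigma_k)$ vanishes on the truncation $(q_1,\dots,q_k)$ of every sequence $q\in\Seq(\Sigma)$, hence on $q$ itself, so it lies in $I$ and therefore in $I_k$. Assertion (2) is then just (1) combined with the closedness of $\pr_2(\Sigma_k)$ and the Nullstellensatz (\Cref{lem:langcount}): the zero set of $I_k$ equals the zero set of the vanishing ideal of the closed set $\pr_2(\Sigma_k)$, which is $\pr_2(\Sigma_k)$ itself.

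For (3), the inclusion of $\Seq(\Sigma)$ in the zero set of $I$ is the definition of $I$. For the reverse, let $q$ be a $K$-point at which $I$ vanishes; by \Cref{lem:finitesubseq} it suffices to produce, for each finite $M\subset\N$ of size $k$, some $x\in X(K)$ whose fibre contains $\{q_i:i\in M\}$. Choosing $\sigma\in G$ with $\sigma(M)=[k]$, assertion (4) shows that $\sigma q$ is again a zero of $I$, in particular of $I_k\subset I$, so by (2) its truncation to the first $k$ index blocks lies in $\pr_2(\Sigma_k)(K)$ --- which, via $\pr_2(\Sigma_k)(K)=\pr_2(\Sigma_k(K))$, is exactly the required statement. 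Finally, for (5): an irreducible scheme is non-empty, so all $\Sigma_k$, and hence $\Seq(\Sigma)$, are non-empty, whence $I\neq S$; given $f,g\in S$ with $fg\in I$, pick $k$ with $f,g\in K[x_{ij}:i\in[k],j\in[n]]$, so that $fg\in I_k$; by (1) the ideal $I_k$ is the vanishing ideal of $\pr_2(\Sigma_k)$, which is irreducible as the continuous image of the irreducible scheme $\Sigma_k$, so $I_k$ is prime, and therefore $f\in I_k\subset I$ or $g\in I_k\subset I$.

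The only genuinely geometric input is the closedness of the image $\pr_2(\Sigma_k)$, which is exactly where projectivity of $X$ is used, together with the elementary but essential equality $\pr_2(\Sigma_k)(K)=\pr_2(\Sigma_k(K))$ coming from $K=\overline K$. Beyond this, the main thing requiring care is purely bookkeeping: tracking which index blocks of variables each polynomial depends on, and the reduction via $G$-invariance in step (3); I do not anticipate a substantive obstacle.
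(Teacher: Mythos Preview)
Your proposal is correct and follows essentially the same approach as the paper: both arguments rest on the closedness of $\pr_2(\Sigma_k)$ (from projectivity of $X$), the padding trick for (1), the reduction of (3) to \Cref{lem:finitesubseq} via (2), and the primality of each $I_k$ for (5). Your treatment is slightly more explicit in places---you spell out the permutation in (3) and the non-triviality $I\neq S$ in (5)---but there is no substantive difference.
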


\begin{proof}
 By the fundamental theorem of elimination theory $Y_k=\pr_2(\Sigma_k)$ is a closed subscheme of $(\A^n_K)^k$. Thus the first statement implies the second statement. The $K$-rational points are dense in $Y_k$ and thus it for proving (1) suffices to show that $I_k$ equals the vanishing ideal $J_k$ of $$Y_k(K)=\{(p_1,\ldots,p_k)\in(\A^n_K)^k(K):\, \exists x\in \pp^m_K(K):\, p_i\in \pr_2(\pr_1^{-1}(x)) \textnormal{ for }i\in[k] \}.$$ Let $f\in I_k$ and $(p_1,\ldots,p_k)\in Y_k(K)$. Then we have $(p_1,\ldots,p_k,p_k,\ldots)\in\Seq(\Sigma)$ and therefore $f(p_1,\ldots,p_k)=0$ since $f\in I$. This shows $I_k\subset J_k$. The other inclusion $J_k\subset I_k$ is clear. Part (3) follows by combining (2) with \Cref{lem:finitesubseq}. Part (4) is clear and part (5) follows from (1) and the fact that $I$ is a prime ideal if all $I_k$ are prime ideals.
\end{proof}

\begin{lem}\label{lem:flatirred}
 Let $f:X\to Y$ be a flat morphism of schemes of finite type over $K$ with $Y$ irreducible. If there is a nonempty open subset $U\subset Y(K)$ such that for all $y\in U$ the fiber $f^{-1}(y)$ is irreducible, then $X$ is irreducible.
\end{lem}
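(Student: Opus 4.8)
The plan is to reduce the assertion to the irreducibility of the generic fibre of $f$, and then to establish that by a spreading-out argument together with the hypothesis on the fibres over $U$.

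Let $\eta$ denote the generic point of $Y$ and let $X_\eta=f^{-1}(\eta)$ be the generic fibre, regarded as a subspace of $X$. Both $X$ and $Y$ are Noetherian, being of finite type over a field, so each has finitely many irreducible components, and $X\neq\emptyset$ since any fibre over a point of $U$ is irreducible, hence nonempty. The key input is that a flat morphism satisfies going-down and therefore lifts generizations. Applying this to a generic point $\xi$ of an irreducible component of $X$: since $\eta$ generizes $f(\xi)$ in $Y$, there is a generization $\xi'$ of $\xi$ in $X$ with $f(\xi')=\eta$, and minimality of $\xi$ forces $\xi'=\xi$, hence $f(\xi)=\eta$. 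Conversely, if $x\in X_\eta$ and $x'$ is any generization of $x$ in $X$, then $f(x')$ generizes $\eta$, so $f(x')=\eta$; thus $X_\eta$ is stable under generization in $X$. Combining the two, the generic points of $X$ coincide with the generic points of $X_\eta$; since both spaces are nonempty and Noetherian, $X$ is irreducible if and only if $X_\eta$ is. So it suffices to prove that $X_\eta$ is irreducible.

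I would prove this by contradiction. If $X_\eta$ is reducible, then $X$ has irreducible components $X_1,\dots,X_k$ with $k\geq 2$, each dominating $Y$ (its generic point maps to $\eta$). Put $X_i^{\circ}=X_i\setminus\bigcup_{j\neq i}X_j$, a nonempty open dense subset of $X_i$. By Chevalley's theorem $f(X_i^{\circ})$ is constructible and dense in $Y$, so it contains a nonempty open subset $V_i\subseteq Y$; set $V=V_1\cap\cdots\cap V_k\neq\emptyset$. For $y\in V$ choose, for each $i$, some $p_i\in X_i^{\circ}$ with $f(p_i)=y$; if $f^{-1}(y)$ were irreducible with generic point $\zeta$, then $\zeta$ would lie in some component $X_{i_0}$ and $p_i\in\overline{\{\zeta\}}\subseteq X_{i_0}$ for all $i$, contradicting $p_i\notin X_{i_0}$ for $i\neq i_0$. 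Hence $f^{-1}(y)$ is reducible for every $y\in V$. Now write $U=U'\cap Y(K)$ with $U'\subseteq Y$ a nonempty open set. As $Y$ is of finite type over $K$ it is a Jacobson space, so the nonempty open set $V\cap U'$ contains a closed point $y_1$ of $Y$; since $K$ is algebraically closed, $\kappa(y_1)=K$, hence $y_1\in Y(K)$ and so $y_1\in U'\cap Y(K)=U$. Then $f^{-1}(y_1)$ is irreducible by hypothesis but reducible since $y_1\in V$ --- a contradiction.

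The crux is the reduction to the generic fibre, i.e.\ the honest use of flatness: the statement is false without it --- for instance $V(xy)\subset\A^2$ maps to $\A^1$ with irreducible generic fibre and set-theoretically irreducible fibres, yet reducible total space --- and flatness enters exactly through going-down, which is what pins every generic point of $X$ over $\eta$. The second delicate point is that the hypothesis only controls fibres over closed $K$-rational points, whereas we need the generic fibre; above this is bridged by spreading out together with the Jacobson property of $Y$. The remaining ingredients (Noetherianity, Chevalley's theorem, the combinatorics of irreducible components) are routine. A shorter but less self-contained alternative would be to invoke Grothendieck's theorem that $\{y\in Y:\ X_y\text{ geometrically irreducible}\}$ is constructible: it contains every closed point of $U'$, hence all of $U'$ by the Jacobson property, in particular $\eta$, and one then concludes as in the second paragraph.
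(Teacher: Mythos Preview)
Your argument is correct and its core---the contradiction in your second paragraph---is exactly the paper's proof: flatness forces each irreducible component of $X$ to dominate $Y$, then Chevalley produces a point of $U$ whose fibre meets every $X_i^\circ$ and is therefore reducible. The preliminary reduction to irreducibility of the generic fibre is correct but unnecessary, since your contradiction argument already shows directly that $X$ cannot have more than one component; the only remaining difference from the paper is cosmetic (you work scheme-theoretically and invoke the Jacobson property at the end to land in $U\subset Y(K)$, whereas the paper stays with $K$-points throughout).
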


\begin{proof}
 Let $X_1,\ldots,X_r$ with $r>1$ be the irreducible components of $X$. It follows for instance from \cite[Prop.~III.9.5]{Hart77} that the generic point of each $X_i$ is mapped to the generic point of $Y$. Thus restricting $f$ to any $X_i$ gives a dominant morphism $X_i\to Y$. Thus by Chevalley's theorem the image of $X_i(K)\setminus\cup_{j\neq i}X_j(K)$ under $f$ contains a nonempty open subset $U_i$ of $Y(K)$. Since $Y$ is irreducible, the intersection of all $U_i$ and $U$ is nonempty. Let $y$ be an element of this intersection. Then $f^{-1}(y)$ is not contained in any of the $X_i(K)$ and thus is not irreducible. This contradicts our assumption on $U$.
\end{proof}

\begin{thm}\label{thm:familiesfixed}
 Let $X$ be an irreducible projective scheme over $K$ and let $\Sigma$ be a closed subscheme of $X\times\A^n_{K}$ such that $\pr_1:\Sigma\to X$ is flat. If there is a nonempty open subset $U\subset X(K)$ such that for all $x\in U$ the fiber $\pr_1^{-1}(x)$ is irreducible, then $\Seq(\Sigma)$ is irreducible. In particular, we obtain a fixed point $\sq(\Sigma)\in\A^n_{K,\infty}$.
\end{thm}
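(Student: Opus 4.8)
The plan is to reduce everything to \Cref{prop:seqclosed}: if $I$ denotes the vanishing ideal of $\Seq(\Sigma)$, then part (5) of that lemma says $I$ is prime as soon as every $k$-fold fiber product $\Sigma_k=\Sigma\times_X\cdots\times_X\Sigma$ is irreducible, and parts (3) and (4) together with \Cref{lem:langcount} will upgrade this to the irreducibility of $\Seq(\Sigma)$ and to the statement that $\sq(\Sigma)$ is a fixed point. So the real content is to prove that each $\Sigma_k$ is irreducible.

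For that I would apply \Cref{lem:flatirred} to the first projection $\pr_1\colon\Sigma_k\to X$. Three things need to be checked. First, $\Sigma_k$ is of finite type over $K$ and $X$ is irreducible: this is immediate, since $\Sigma$ is a closed subscheme of $X\times\A^n_K$ with $X$ projective, and $X$ is irreducible by hypothesis. Second, $\pr_1\colon\Sigma_k\to X$ is flat: writing $\Sigma_k\to\Sigma_{k-1}$ as the base change of the flat morphism $\pr_1\colon\Sigma\to X$ along $\pr_1\colon\Sigma_{k-1}\to X$ shows inductively that this projection is flat, and $\pr_1\colon\Sigma_k\to X$ is then a composition of flat morphisms. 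Third, I need a nonempty open subset of $X(K)$ over which all fibers of $\pr_1\colon\Sigma_k\to X$ are irreducible, and I claim the given $U$ works: for $x\in X(K)$ we have $\kappa(x)=K$, and since fiber products commute with taking fibers, the fiber $\pr_1^{-1}(x)\subset\Sigma_k$ is the $k$-fold product over $\Spec K$ of the fiber $F_x=\pr_1^{-1}(x)\subset\Sigma$. For $x\in U$ the scheme $F_x$ is irreducible by hypothesis, hence \emph{geometrically} irreducible because $K$ is algebraically closed, so its $k$-fold product $F_x\times_K\cdots\times_K F_x$ is irreducible as well. \Cref{lem:flatirred} now yields that $\Sigma_k$ is irreducible for every $k\geq 1$.

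With all $\Sigma_k$ irreducible, \Cref{prop:seqclosed}(5) gives that $I$ is prime. By \Cref{prop:seqclosed}(3) the zero set of $I$ in $\A^n_{K,\infty}(K)$ is exactly $\Seq(\Sigma)$, and by \Cref{lem:langcount}(1) this set of $K$-rational points is dense in the closed subscheme $V(I)\subset\A^n_{K,\infty}$; since $I$ is prime, $V(I)$ is irreducible, hence so is its dense subset $\Seq(\Sigma)$, and $\overline{\Seq(\Sigma)}=V(I)$ has a generic point, namely the point corresponding to $I$. Finally $I$ is $G$-invariant by \Cref{prop:seqclosed}(4), so this generic point $\sq(\Sigma)$ is a $G$-fixed point of $\A^n_{K,\infty}$, which is the asserted conclusion.

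I expect the only genuinely non-formal step to be the irreducibility of the product $F_x\times_K\cdots\times_K F_x$ of finitely many irreducible finite-type $K$-schemes; everything else is bookkeeping with flatness and base change. This is precisely the place where the standing assumption that $K$ is algebraically closed enters, through the passage from irreducible to geometrically irreducible fibers.
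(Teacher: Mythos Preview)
Your proof is correct and follows essentially the same route as the paper: reduce to irreducibility of each $\Sigma_k$ via \Cref{prop:seqclosed}, then verify the hypotheses of \Cref{lem:flatirred} using stability of flatness under base change and composition together with irreducibility of products of irreducible fibers over the algebraically closed field $K$. You spell out in more detail than the paper does how primeness of $I$ together with parts (3) and (4) of \Cref{prop:seqclosed} yields the stated conclusion about $\Seq(\Sigma)$ and $\sq(\Sigma)$, and you correctly isolate the one place where the standing assumption on $K$ is used.
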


\begin{proof}
 By \Cref{prop:seqclosed} it suffices to show that $\Sigma_k$ is irreducible for all $k$. Since flatness is preserved under composition and base change \cite[Prop.~III.9.2]{Hart77}, the projection $\Sigma_k\to X$ is flat. For every $x\in U$ the fiber of this projection is the direct product of irreducibles  and thus is irreducible itself. Therefore, the claim follows then from \Cref{lem:flatirred}.
\end{proof}

\begin{ex}
 Let $\Sigma\subset\G(r,n)\times\A^n_K$ be the closed subscheme whose $K$-rational points are of the form $(H,x)$ with $x\in H$. Each fiber of $\pr_1$ is a linear subspace of dimension $r$. Thus we can apply \Cref{thm:familiesfixed} to deduce that $\Seq(\Sigma)$ is irreducible. The set $\Seq(\Sigma)$ consists of all sequences in $\A^n_K(K)$ that are entirely contained in some linear subspace of dimension $r$. Therefore, the fixed point $\sq(\Sigma)$ is the prime ideal of $S$ generated by all $(r+1)\times (r+1)$ minors of the matrix $$\begin{pmatrix}
 x_{11} & x_{21} & \ldots & x_{k1} & \ldots \\
 \vdots & \vdots & & \vdots & \\
 x_{1n} & x_{2n} & \cdots & x_{kn} & \ldots 
 \end{pmatrix}$$ with $n$ rows and infinitely many columns.
\end{ex}

\begin{ex}\label{ex:infichain}
 Contrary to \Cref{ex:n1}, in the case $n>1$ there are infinite specialization chains of fixed points in $\A^n_{K,\infty}$. Indeed, let $$\Sigma_d\subset\pp(K[x_1,\ldots,x_n]_{\leq d})_K\times\A^n_K$$ the generic hypersurface of degree (at most) $d$, i.e. the $K$-rational points of $\Sigma_d$ are of the form $([f],x)$ where $f\in K[x_1,\ldots,x_n]$ is a nonzero polynomial of degree at most $d$ and $x\in\A^n_K(K)$ such that $f(x)=0$. A general fiber of $\pr_1$ is an irreducible hypersurface of degree $d$. It further follows from the fact that open immersions are flat together with \cite[Prop.~III.9.9]{Hart77} that $\pr_1$ is flat. Thus by \Cref{thm:familiesfixed} the set $\Seq(\Sigma_d)$ is irreducible. It consists of those sequences $(p_i)_{i\in\N}\subset \A_K^n(K)$ for which there exists a non-zero polynomial of degree at most $d$ that vanishes on all $p_i$. In particular, we have $\Seq(\Sigma_d)\subset \Seq(\Sigma_{d+1})$. Thus letting $z_d=\sq(\Sigma_d)$ we have the following infinite specialization chain of fixed points in $\A^n_{K,\infty}$: $$\cdots\leadsto z_4\leadsto z_3\leadsto z_2\leadsto z_1.$$
\end{ex}

\subsection{Orbit closures of arbitrary points in $\A^n_\infty$}

Finally we describe the vanishing ideals of orbit closures of arbitrary points in the infinite affine space.

\begin{lem}\label{lem:cutdownideals}
 Let $\varphi:X\to Y$ a morphism of affine schemes $X=\Spec(B)$ and $Y=\Spec(A)$. Let $\varphi^*:A\to B$ the pullback. Let $I_B\subset B$ an ideal and $I_A=(\varphi^*)^{-1}(I_B)$.
 \begin{enumerate}
     \item  Let $S\subset X$ the zero set of $I_B$. The zero set of $I_A$ in $Y$ is the closure of $\varphi(S)$.
     \item Assume further that $\varphi:X\to Y$ a morphism of affine schemes of countable type over $K$ and let $S\subset X(K)$ the zero set of $I_B$. Then the zero set of $I_A$ in $Y(K)$ is the closure of $\varphi(S)$.
 \end{enumerate}
\end{lem}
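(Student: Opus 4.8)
The plan is to reduce both parts to the elementary identity
\[
(\varphi^*)^{-1}(\sqrt{I_B})=\sqrt{I_A}
\]
together with the description of a Zariski closure as the vanishing locus of a vanishing ideal. The identity itself is immediate: for $a\in A$ one has $\varphi^*(a)\in\sqrt{I_B}$ if and only if $\varphi^*(a)^k=\varphi^*(a^k)\in I_B$ for some $k\in\N$, i.e.\ $a^k\in I_A$ for some $k$, i.e.\ $a\in\sqrt{I_A}$. Throughout I write $V(J)$ for the vanishing locus in the relevant spectrum of an ideal $J$.

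For part (1), recall that on spectra $\varphi$ acts by $\fq\mapsto(\varphi^*)^{-1}(\fq)$, that $S=V(I_B)$ is the set of primes $\fq\supset I_B$, and that the closure of any subset $Z\subset\Spec A$ equals $V\big(\bigcap_{\fp\in Z}\fp\big)$. Since taking preimages under $\varphi^*$ commutes with intersections and $\bigcap_{\fq\supset I_B}\fq=\sqrt{I_B}$, I obtain
\[
\overline{\varphi(S)}=V\Big(\bigcap_{\fq\supset I_B}(\varphi^*)^{-1}(\fq)\Big)=V\Big((\varphi^*)^{-1}\big(\bigcap_{\fq\supset I_B}\fq\big)\Big)=V\big((\varphi^*)^{-1}(\sqrt{I_B})\big)=V(\sqrt{I_A})=V(I_A),
\]
which is exactly the zero set of $I_A$ in $Y$.

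For part (2), I use part (3) of \Cref{lem:langcount}, which is available because $X$ and $Y$ are of countable type over $K$. An element $g\in A$ vanishes on $\varphi(S)$ if and only if $\varphi^*(g)$ vanishes on $S$; by part (3) of \Cref{lem:langcount} applied to $X$ and $I_B$, the elements of $B$ vanishing on $S$ are exactly those of $\sqrt{I_B}$, so $g$ vanishes on $\varphi(S)$ if and only if $g\in(\varphi^*)^{-1}(\sqrt{I_B})=\sqrt{I_A}$. Hence the vanishing ideal of $\varphi(S)$ in $A$ is $\sqrt{I_A}$. Since the closure of any subset of $Y(K)$ is the zero set of its vanishing ideal and a maximal ideal is radical, the closure of $\varphi(S)$ in $Y(K)$ is the zero set of $\sqrt{I_A}$ in $Y(K)$, which coincides with the zero set of $I_A$.

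I do not expect a genuine obstacle; the argument is bookkeeping around contraction of ideals. The point worth emphasising is that part (2) really needs the countable-type hypothesis: it is part (3) of \Cref{lem:langcount} that converts the formal manipulation of vanishing ideals into a statement about Zariski closures of $K$-rational points, whereas part (1) is a tautology about prime spectra of arbitrary rings. If preferred, part (2) can alternatively be deduced from part (1): by part (1) of \Cref{lem:langcount} the set $S$ is dense in the closed subscheme $V(I_B)\subset X$, so its continuous image $\varphi(S)$ is dense in $\varphi(V(I_B))$, whose closure in $Y$ is $V(I_A)$ by part (1); intersecting with $Y(K)$ then yields the claim. I would keep whichever of the two routes reads more cleanly in the final write-up.
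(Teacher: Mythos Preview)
Your argument is correct and is essentially the paper's own proof, just written out in more detail: the paper calls (1) a ``basic fact'' and proves (2) by exactly your computation that $f$ vanishes on $\varphi(S)$ iff $\varphi^*(f)\in\sqrt{I_B}$ (via \Cref{lem:langcount}) iff $f\in\sqrt{I_A}$. Your explicit isolation of the identity $(\varphi^*)^{-1}(\sqrt{I_B})=\sqrt{I_A}$ and the alternative density route at the end are helpful additions but do not change the underlying approach; the stray clause ``and a maximal ideal is radical'' is unnecessary and can be dropped.
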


\begin{proof}
 (1) is a basic fact from algebraic geometry and (2) can be proved analogously. To that end let $T\subset Y(K)$ the zero set of $I_A$. The containment $\varphi(S)\subset T$ is then clear. For proving the other inclusion let $f\in A$ vanish on $\varphi(S)$ which implies that $\varphi^*(f)$ vanishes on $S$. By \Cref{lem:langcount} this implies that $\varphi^*(f)^k\in I_B$. But that shows that $f\in\sqrt{I_A}$ and thus $f$ vanishes on $T$.
\end{proof}

\begin{thm}\label{thm:orbitclos}
 Let $y\in\A^n_{K,\infty}$ and $I$ the vanishing ideal of the orbit closure of $y$. There is an affine integral scheme $X$ of finite type over $K$, closed subschemes $\Sigma_0,\ldots,\Sigma_r\subset X\times\A^n_K$ and natural numbers $m_1,\ldots, m_r$ such that the following holds. The ideal $I$ is the vanishing ideal of all sequences $(p_i)_{i\in\N}\subset\A_K^n(K)$ for which there exists $x\in X(K)$ with:
 \begin{enumerate}
  \item For all $i\in\N$ we have $p_i\in\Sigma(0,x)$, and
  \item for all $j\in[r]$ there are at most $m_j$ indices $i\in\N$ such that $p_i\not\in\Sigma(j,x)$.
 \end{enumerate}
 Here $\Sigma(j,x)\subset\A^n_K(K)$ denotes the fiber of $x\in X(K)$ under $\pr_1:\Sigma_j\to X$.
\end{thm}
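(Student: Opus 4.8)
The plan is to follow the pattern of \Cref{thm:symprime}: first pass to the orbit closure of a \emph{closed} point over a large field extension, where \Cref{prop:orbitclos} applies verbatim, then spread the resulting geometric data out over a finite type model over $K$. Concretely, let $\fq\subset S$ be the prime ideal corresponding to $y$ and choose an embedding of its residue field into an algebraically closed field $L$; as in \Cref{lem:runterschneiden} this yields a closed point $\tilde y\in\A^n_{L,\infty}$ with $\pi(\tilde y)=y$, where $\pi\colon\A^n_{L,\infty}\to\A^n_{K,\infty}$ is the natural ($G$-equivariant) projection. Since $\pi$ is $G$-equivariant one gets $\overline{Gy}=\overline{\pi(\overline{G\tilde y})}$, so by \Cref{lem:cutdownideals} the ideal $I$ equals $I_Z\cap S$, where $Z=\overline{G\tilde y}$ and $I_Z\subset S\otimes_K L$ is its (radical) vanishing ideal. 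Thus it suffices to understand $Z$ and then descend.

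Now apply \Cref{prop:orbitclos} to the closed point $\tilde y$: writing $V_{\tilde y}=V_1\cup\dots\cup V_a\cup\{v_1\}\cup\dots\cup\{v_b\}$ for the decomposition into irreducible components ($V_i$ of positive dimension, $v_j$ closed points), the set $Z(L)$ consists exactly of the sequences in $V_{\tilde y}(L)$ in which each $v_j$ occurs at most $m_j:=\nu(\tilde y,v_j)$ times; equivalently, with $W_j:=\overline{V_{\tilde y}\setminus\{v_j\}}$, at most $m_j$ indices $i$ satisfy $p_i\notin W_j$. Moreover \Cref{rem:idealoforbit} provides explicit generators of $I_Z$: the shifts of finitely many polynomials cutting out $V_{\tilde y}$ together with the $G$-orbits of finitely many products $P_{kj}$ built from polynomials cutting out the $W_j$. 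All coefficients occurring here lie in a subfield $L_0\subset L$ finitely generated over $K$. I would then choose an affine integral scheme $X$ of finite type over $K$ with function field $L_0$, take $\Sigma_0,\Sigma_1,\dots,\Sigma_r\subset X\times\A^n_K$ (with $r=b$) to be the closures of $V_{\tilde y},W_1,\dots,W_b$, and let $\cJ\subset S\otimes_K\cO(X)$ be the ideal generated by the spread-out generators of $I_Z$ inside $\A^n_{X,\infty}:=\Spec(S\otimes_K\cO(X))$. After replacing $X$ by a dense affine open — which does not affect the statement — I may assume, by generic flatness and openness of the relevant conditions at the generic point, that for every $x\in X(K)$ the specialized polynomials still cut out $\Sigma_0(x)$, resp. $\Sigma_j(x)$, that $\Sigma_0(x)\setminus\Sigma_j(x)$ is the single point $v_j(x)$ into which $v_j$ spreads, and that $\cZ:=V(\cJ)^{\mathrm{red}}$ equals the closure of $Z$ (equivalently of its $L_0$-form $Z_0$) in $\A^n_{X,\infty}$ already over $X$.

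To finish, set $\cT=\{(p_i)_{i\in\N}\subset\A^n_K(K):\ \exists x\in X(K)\text{ satisfying }(1)\text{ and }(2)\}$. Running the component-elimination and pigeonhole computations from the proofs of \Cref{prop:seqclosed} and \Cref{prop:orbitclos} fiberwise — using, as in \Cref{lem:finitesubseq}, that membership is detected on finite subsequences — shows that for $x\in X(K)$ the specialized generators of $\cJ$ vanish at $(p_i)$ exactly when $(p_i)$ satisfies $(1)$ and $(2)$; hence $\cT$ is precisely the set of $K$-points of $\cZ$. Since $\cZ$ is the closure of its (dense) generic fiber, a polynomial $f\in S$ vanishes on $\cZ$ iff it vanishes on $Z$, so $I_{\cZ}\cap S=I_Z\cap S=I$; and since the $K$-points of each fiber of $\cZ\to X$ are dense in that fiber by \Cref{lem:langcount} while $X(K)$ is dense in $X$, the set $\cT$ of all these $K$-points is dense in $\cZ$. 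Therefore the vanishing ideal of $\cT$ in $\A^n_{K,\infty}$ equals $I_{\cZ}\cap S=I$, which is exactly the assertion of the theorem.

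The step I expect to be the main obstacle is the uniform spreading-out underlying the paragraph above: one must prove that, after shrinking $X$, the fiberwise combinatorial description of \Cref{prop:orbitclos} — in particular the exact multiplicity bounds $m_j$ and the identity $\Sigma_0(x)\setminus\Sigma_j(x)=\{v_j(x)\}$ — remains valid for \emph{all} $K$-points $x\in X(K)$, and that $V(\cJ)$ acquires no spurious components over the shrunken base so that $\cZ=V(\cJ)^{\mathrm{red}}$ there. This is a constructibility/generic-flatness argument applied to finitely many finite-dimensional truncations of $\cJ$; that finitely many truncations suffice — so that only finitely many localizations of $\cO(X)$ are needed — is precisely where the finiteness built into \Cref{rem:idealoforbit} (boundedly many variables, finitely many $G$-orbits of generators) enters, and is the point at which one must be careful to avoid silently invoking equivariant Noetherianity.
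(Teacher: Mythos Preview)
Your overall strategy---lift $y$ to a closed point $\tilde y$ over an algebraically closed extension $L$, apply \Cref{prop:orbitclos} and \Cref{rem:idealoforbit} to $\overline{G\tilde y}$, then descend to a finite-type base over $K$---is exactly the paper's. The divergence is in how the finite-type base is produced. The paper does not spread out and shrink. Instead it invokes Cohen's theorem directly: since $I_K=I_L\cap S$ is $G$-invariant, there exist finitely many $f_1,\dots,f_s\in S$ whose $G$-orbits generate $I_K$. Each $f_l$ is then written as an $S_L$-linear combination of the explicit generators of $I_L$ from \Cref{rem:idealoforbit}, and $A$ is taken to be the $K$-subalgebra of $L$ generated by all coefficients occurring in these expressions together with the coefficients of the $g_{kj}$. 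This single choice of $A$ forces $I_K\subset I_A\subset I_L$, hence $I_A\cap S=I_K$ immediately, and one reads off $X=\Spec(A)$ and $\Sigma_k=V(g_{k1},\dots,g_{ka_k})\subset X\times\A^n_K$ with no shrinking at all.

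Your route instead takes $A$ to be any finite-type model of the field $L_0$ generated by the coefficients of the $g_{kj}$ alone, and then tries to recover $I_\cZ\cap S=I$ by arguing that the generic fibre is dense in $V(\cJ)$ after shrinking $X$. If this worked it would eliminate the appeal to Cohen---a genuine gain, since the paper's proof \emph{does} use equivariant Noetherianity here despite the remark in the introduction. But the obstacle you flag is real and your sketch does not close it: $V(\cJ)$ is only of countable type over $X$, so generic flatness does not apply to it directly, and shrinking $X$ separately for each finite truncation $\cJ_k$ may require infinitely many localizations whose intersection need not contain a nonempty open. Having finitely many $G$-orbits of generators (which is all \Cref{rem:idealoforbit} gives you) does not by itself bound the number of truncations you must control; you would need an additional uniformity argument exploiting the $G$-symmetry to show that one sufficiently large truncation governs all the others. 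Absent that, the inclusion $I\subset \sqrt{\cJ}$ (equivalently, density of the generic fibre in $\cZ$) is exactly what the paper's use of Cohen buys for free.
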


\begin{proof}
 As in the proof of \Cref{lem:orbitclosprime} there is an algebraically closed field extension $K\subset L$ and a point $x\in\A^n_{L,\infty}(L)$ with $\pi(x)=y$ where $\pi:\A^n_{L,\infty}\to\A^n_{K,\infty}$ is the natural projection. Letting $Y\subset\A^n_{K,\infty}$ be the orbit closure of $y$ and $X\subset\A^n_{K,\infty}$ the orbit closure of $x$, we have $Y=\overline{\pi(X)}$ since $\pi$ is $G$-equivariant. As in \Cref{rem:idealoforbit} we can find explicit generators of an ideal $I_L\subset S_L:=S\otimes_K L$ whose zero set is $X$. In order to describe these here, we use the notation $h^i=h(x_{i1},\ldots, x_{in})\in S_L$ for $h\in L[x_1,\ldots,x_n]$ as in \Cref{rem:idealoforbit}. There are $m_1,\ldots,m_r\in\N$ and for $k=0,\ldots,r$ polynomials $g_{k1},\ldots,g_{ka_k}\in L[x_1,\ldots,x_n]$ such that we can choose $I_L$ to be generated by $g_{01}^i,\ldots,g_{0a_0}^i$ for $i\in\N$ and the $G$-orbits of the polynomials $p_{kj}=g_{kj}^1\cdots g_{kj}^{m_k+1}$ for $k=1,\ldots,r$ and $j=1,\ldots,a_k$. By \Cref{lem:cutdownideals} $Y$ is the zero set of $I_K=I_L\cap S$. By \cite{cohen} there are finitely many $f_1,\ldots,f_s\in S$ whose $G$-orbits generate $I_K$. Since $I_K=S\cap I_L$, each of these $f_l$ can be written as a linear combination of the $g_{0i}$ and the $p_{kj}$ with coefficients $h\in S_L$. We let $A$ be the $K$-algebra that is generated by the (finitely many) coefficients of these $h$ and all $g_{kj}$ as polynomials over $L$. Thus the $g_{kl}$ lie in the subalgebra $S_A=S\otimes_K A$ of $S_L$. We let $I_A$ be the ideal in $S_A$ generated by $g_{01}^i,\ldots,g_{0a_0}^i$ for $i\in\N$ and the $G$-orbits of the polynomials $p_{kj}$ for $k=1,\ldots,r$ and $j=1,\ldots,a_k$. By construction we have $I_K\subset I_A\subset I_L$. This shows that $I_K=I_A\cap S$. By \Cref{lem:cutdownideals} a dense subset of $Y(K)$ is thus given by the image of the $K$-points of the zero set of $I_A$ under the projection $\Spec(S_A)\to\A^n_{K,\infty}$. Therefore, the claim follows by choosing $X=\Spec(A)$ and $\Sigma_k$ to be the zero set of the $g_{k1},\ldots,g_{ka_k}$ in $X\times\A_K^n$.
\end{proof}

\begin{ex}
 Let $n=1$ and consider the ideal of $S$ generated by $x_3,x_4,x_5,\ldots$. The zero set of this ideal is clearly irreducible. Denote by $y\in\A^1_{K,\infty}$ its generic point. The $K$-rational points of the orbit closure of $y$ is the set of all sequences with at most two nonzero members. Its vanishing ideal is generated by the $G$-orbit of $x_1x_2x_3$. In the notation of \Cref{thm:orbitclos} we can choose $X=\Spec(K)$, $\Sigma_0=\A^1_K$, $\Sigma_1=\{0\}$ and $m_1=2$.
\end{ex}

We close with a conjecture on the structure of the orbit space $\A^n_{K,\infty}/G$. This space is rather badly behaved as it does not even satisfy the $T_0$ condition. Indeed, for example the sequences $(2i)_{i\in\N}$ and $(2i+1)_{i\in\N}$ of even and odd numbers have distinct orbits in $\A^1_{\C,\infty}$ but their orbit closures agree. In order to avoid such pathologies we pass to the Kolmogorov quotient $\textnormal{KQ}(\A^n_{K,\infty}/G)$, i.e., we identify topologically indistinguishable points. By a classical result first proved by Cohen in \cite{cohen} this space is noetherian. We conjecture that it is even a spectral space.

\begin{con}\label{con:sober}
 The Kolmogorov quotient $\textnormal{KQ}(\A^n_{K,\infty}/G)$ is a spectral space.
\end{con}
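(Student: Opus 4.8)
The plan is to invoke Hochster's characterization \cite{hochster}: a topological space is spectral if and only if it is $T_0$, quasi-compact, has a basis of quasi-compact open sets closed under finite intersection, and is sober (every irreducible closed set has a unique generic point). The space $\textnormal{KQ}(\A^n_{K,\infty}/G)$ is $T_0$ by construction, and it is noetherian by Cohen's theorem, so quasi-compactness and the basis condition are automatic: in a noetherian space every open set is quasi-compact. Hence the entire content of the conjecture is \emph{soberness}, which is exactly the delicate point.

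The strategy for soberness would be to use the results of the preceding sections to produce generic points of orbit closures. First I would observe that the closed subsets of $\A^n_{K,\infty}/G$ correspond bijectively to the $G$-invariant radical ideals of $S$, equivalently to $G$-invariant closed subsets of $\A^n_{K,\infty}$, and that the quotient map $q\colon\A^n_{K,\infty}\to\A^n_{K,\infty}/G$ is continuous, closed on $G$-invariant sets, and surjective. An irreducible closed subset $C$ of $\A^n_{K,\infty}/G$ pulls back to a $G$-invariant closed subset $q^{-1}(C)\subset\A^n_{K,\infty}$; I would argue that $q^{-1}(C)$ is again irreducible (if it decomposed into several $G$-invariant components, these would be permuted by $G$, but $G$ is the group so they would each be invariant, and an irreducible image forces a single component, using that $q$ is an open or at least quotient map). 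Then $q^{-1}(C)$ has a generic point $\eta$; since $q^{-1}(C)$ is $G$-invariant and $\eta$ generates it, $\overline{G\eta}=q^{-1}(C)$, and by the characterization of orbit closures developed via \Cref{thm:orbitclos} one can take $\eta$ to be of the controlled form produced there. The image $q(\eta)$ would then be the generic point of $C$. Uniqueness of the generic point follows from the $T_0$ property together with the fact that two points of $\A^n_{K,\infty}$ with the same orbit closure map to topologically indistinguishable points, hence to the same point of $\textnormal{KQ}$.

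The main obstacle — and the reason this is only a conjecture — is the step asserting that \emph{every} irreducible $G$-invariant closed subset of $\A^n_{K,\infty}$ has a $G$-fixed generic point, or more precisely that its generic point can be chosen $G$-invariant after passing to the Kolmogorov quotient. The orbit closures described in \Cref{thm:orbitclos} are parametrized by data $(X,\Sigma_0,\ldots,\Sigma_r,m_1,\ldots,m_r)$ living over a \emph{field extension} $L$ of $K$, and it is not clear that a given irreducible invariant closed set is an orbit closure of a single point of $\A^n_{K,\infty}$ rather than an intersection or limit of such; when $n>1$ the infinite specialization chains of \Cref{ex:infichain} show the poset of fixed points is genuinely complicated, and one must rule out pathological irreducible invariant closed sets whose "generic behavior" is not captured by any single tuple of subschemes. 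Concretely, the hard part is to show that the assignment $C\mapsto$ (its generic point in $\textnormal{KQ}$) is well-defined and surjective onto the set of Kolmogorov-classes of fixed points, i.e. that the closure operation and the fixed-point classification of \Cref{thm:symprime} and \Cref{thm:orbitclos} are compatible at the level of the quotient. I would expect that for $n=1$ this can be pushed through completely using \Cref{ex:n1} (the specialization poset is finite of length $\le 3$), giving an unconditional proof in that case, while for general $n$ the argument requires a finiteness input — perhaps a strengthening of equivariant Noetherianity to the effect that every invariant radical ideal is an irredundant intersection of finitely many invariant primes each of which is an orbit-closure ideal — which is precisely what \Cref{con:sober} is meant to encode and which I would leave as the crux.
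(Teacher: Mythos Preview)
The statement is a \emph{conjecture} in the paper, not a theorem; the paper offers no proof for general $n$ and only remarks that the case $n=1$ follows from \cite{nagpal2020symmetric}. So there is no ``paper's own proof'' to compare against. Your reduction to soberness via Hochster's criterion together with equivariant noetherianity is correct and is exactly the paper's framing: the content of the conjecture is precisely that $\textnormal{KQ}(\A^n_{K,\infty}/G)$ is sober.

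However, your proposed route to soberness contains a genuine error, and your diagnosis of the obstacle is misplaced. You claim that for $C$ irreducible in the quotient, the preimage $q^{-1}(C)$ is again irreducible in $\A^n_{K,\infty}$. This is false already for $n=1$: take $Z=V(x_i(x_i-1):i\in\N)$, the set of $\{0,1\}$-valued sequences. By \Cref{prop:orbitclos} this is the orbit closure of the single point $(0,1,0,1,\ldots)$, hence its image in $\textnormal{KQ}$ is irreducible; but $Z$ has uncountably many irreducible components (one for each function $\N\to\{0,1\}$), so $Z$ is far from irreducible in $\Spec(S)$. Your parenthetical justification (``components would be permuted by $G$, hence each invariant'') does not hold.

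Conversely, the step you flag as the obstacle --- that an irreducible $G$-invariant closed subset has a $G$-fixed generic point --- is actually automatic: an irreducible closed subset of $\Spec(S)$ has a \emph{unique} generic point (its minimal prime), and uniqueness forces $G$-invariance. So the difficulty is not there. The real content of the conjecture is that every $G$-invariant closed set $Z$ which is \emph{$G$-irreducible} (not a union of two proper $G$-invariant closed subsets) is the orbit closure $\overline{Gy}$ of some single point $y\in\A^n_{K,\infty}$, without any assumption that $Z$ itself be irreducible. \Cref{thm:orbitclos} describes what orbit closures look like, but does not show that every $G$-irreducible closed set arises this way; that gap is the conjecture. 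Your final paragraph gestures in the right direction, but the preceding outline would need to be rewritten around $G$-irreducibility rather than ordinary irreducibility. For $n=1$, note that \Cref{ex:n1} alone is insufficient (it only lists the fixed points, not all orbit closures); the paper instead appeals to the full classification of invariant radical ideals in \cite{nagpal2020symmetric}.
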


Recall that a noetherian $T_0$ space is a \emph{spectral space} if it is \emph{sober} in the sense that every irreducible subset has a generic point. A result of Hochster \cite{hochster} says that every spectral space can be realized as the spectrum of a commutative ring. Our interest in \Cref{con:sober} stems from the following fact.

\begin{lem}
 If \Cref{con:sober} is true, then every $G$-invariant closed subset $X$ of $\A^n_{K,\infty}$ is the union of the orbit closures of finitely many points. In particular, the vanishing ideal of $X$ is the intersection of finitely many ideals $I$ as in \Cref{thm:orbitclos}.
\end{lem}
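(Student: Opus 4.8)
The plan is to push the closed subset $X$ down through the two successive quotient maps
\[
q\colon \A^n_{K,\infty}\longrightarrow \A^n_{K,\infty}/G,\qquad
p\colon \A^n_{K,\infty}/G\longrightarrow \textnormal{KQ}(\A^n_{K,\infty}/G),
\]
to use noetherianity and soberness of the target to decompose its image into finitely many irreducible closed pieces with generic points, and then to lift those generic points back to $\A^n_{K,\infty}$.

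First I would record the elementary fact that both $q$ and $p$ identify the lattices of closed subsets of source and target in a manner compatible with finite unions and closures. For $q$, a subset $C\subseteq\A^n_{K,\infty}$ that is $G$-invariant satisfies $q^{-1}(q(C))=GC=C$, so a $G$-invariant set is closed if and only if its image is, and conversely every closed subset of $\A^n_{K,\infty}/G$ is $q(C)$ for a unique closed $G$-invariant $C$. For $p$, any closed subset of a topological space is saturated for topological indistinguishability (if $a$ lies in a closed set and $\overline{\{b\}}=\overline{\{a\}}$, then $b\in\overline{\{a\}}$ lies in that closed set), so $p^{-1}(p(D))=D$ for closed $D$. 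Combining the two, $p\circ q$ carries closed $G$-invariant subsets bijectively onto closed subsets of $\textnormal{KQ}(\A^n_{K,\infty}/G)$, and for a point $y\in\A^n_{K,\infty}$ one checks in the same way that $q(\overline{Gy})=\overline{\{q(y)\}}$: indeed $\overline{Gy}$ is closed and $G$-invariant because $G$ acts by homeomorphisms, and it is the smallest closed set containing $y$ with this property, whence $p(q(\overline{Gy}))=\overline{\{p(q(y))\}}$.

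Now I would run the argument. Since $X$ is closed and $G$-invariant, $Z:=p(q(X))$ is a closed subset of $\textnormal{KQ}(\A^n_{K,\infty}/G)$; this space is noetherian by the theorem of Cohen quoted above and, assuming \Cref{con:sober}, a spectral space, hence sober. Noetherianity yields a decomposition $Z=Z_1\cup\cdots\cup Z_k$ into irreducible closed subsets, and soberness provides a generic point $\eta_i$ of each $Z_i$, so $Z_i=\overline{\{\eta_i\}}$. Since $(p\circ q)^{-1}(Z)=X$, we have $\eta_i\in p(q(X))$, so we may choose $\xi_i\in p^{-1}(\eta_i)\subseteq q(X)$ and then a point $y_i$ in the $G$-orbit $q^{-1}(\xi_i)\subseteq q^{-1}(q(X))=X$. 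I claim $X=\overline{Gy_1}\cup\cdots\cup\overline{Gy_k}$. The inclusion $\supseteq$ is clear because $X$ is closed, $G$-invariant and contains each $y_i$. For $\subseteq$, take $z\in X$; then $p(q(z))\in Z$, so $p(q(z))\in Z_i=\overline{\{\eta_i\}}=\overline{\{p(q(y_i))\}}=p(q(\overline{Gy_i}))$ for some $i$, and applying $(p\circ q)^{-1}$, which returns $\overline{Gy_i}$ by the saturation properties above, gives $z\in\overline{Gy_i}$.

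Finally, for the ``in particular'' clause, the vanishing ideal of $X$ is $\bigcap_{i=1}^k I(\overline{Gy_i})$, and by \Cref{thm:orbitclos} each $I(\overline{Gy_i})$ is an ideal of the stated form. I do not expect a genuine obstacle here; the only care needed is the bookkeeping of which subsets are saturated, for $G$-orbits respectively for topological indistinguishability, so that closed sets, finite unions and generic points transport through $q$ and $p$ in both directions.
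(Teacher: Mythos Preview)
Your argument is correct and follows the same route as the paper's proof: push $X$ to a closed subset of $\textnormal{KQ}(\A^n_{K,\infty}/G)$, use noetherianity to decompose into finitely many irreducible components, use soberness to obtain generic points, and lift back to orbit closures. The paper states this in a few lines without spelling out the saturation bookkeeping for $q$ and $p$, whereas you have made those verifications explicit; the underlying idea is identical.
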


\begin{proof}
 The set $X$ maps to a closed subset of $\textnormal{KQ}(\A^n_{K,\infty}/G)$. Since this latter space is noetherian, this closed subset is the union of its finitely many irreducible components. If \Cref{con:sober} is true, then each of these irreducible components has a generic point. This shows that $X$ is the union of finitely many orbit closures.
\end{proof}

The case $n=1$ of \Cref{con:sober} follows from the main result of \cite{nagpal2020symmetric}. Indeed, a description of the generic point of an irreducible subset is given in the last paragraph of \cite[\S 1.1]{nagpal2020symmetric}. Note that the conjectural description of vanishing ideals is reminiscent of the Shift Theorem \cite[Theorem~3.1.1]{draisma2021components} but without the need of paasing to a localization.

\section{Equivariant semi-algebraic geometry}\label{sec:semialg}
Let $A$ be a finitely generated, reduced $\R$-algebra and let $V=\Spec(A)$. Further let $B=A[x_1,x_2,x_3,\ldots]$ with the natural $G$ action on it and let $X=\Spec(B)$. We equip the space $X(\R)$ with the topology generated by the open sets $U(f)=\{p\in X(\R):\,f(p)>0\}$ for $f\in B$. We have $X(\R)=V(\R)\times\A^1_{\R,\infty}(\R)$ where the projection $\pi:X\to V$ is induced by the inclusion $A\hookrightarrow B$. 
A fundamental result in real algebraic geometry is the so called quantifier elimination due to Tarski and Seidenberg which in particular guarantees that a projection of a semi-algebraic set is again semi-algebraic.  
The goal of this section is to prove a version of this theorem (\Cref{thm:tarskiseid}) for the projection $\pi$ in the case of basic equivariant semi-algebraic sets.

\begin{Def}\label{def:semialgebaric}
 A \emph{basic equivariant semi-algebraic set} is a subset of $X(\R)$ of the form \[\bigcap_{\sigma\in G}\{p\in X(\R):\, (\sigma f_1)(p)\geq0,\ldots, (\sigma f_r)(p)\geq0, \, (\sigma g_1)(p)>0,\ldots, (\sigma g_s)(p)>0\}\]for some $f_1,\ldots,f_r,g_1,\ldots,g_s\in B$. 
\end{Def}
\begin{rem}
Similarly  to the finite variable case one might define \emph{equivariant semi-algebraic set} as the sets obtained by a finite boolean combination of basic equivariant semi-algebraic sets. However, the quantifier elimination which we show below is currently only established for the basic equivariant semi-algebraic case.
\end{rem}

In order to show quantifier elimination for these equivariant basic semi-algebraic sets we will rely on the following property of  symmetric  semi-algebraic sets.
\begin{lem}\label{lem:salem} Let $p\in\R\cup\{\infty\}$.
 A symmetric semi-algebraic set $S\subseteq\R^n$ has the following property:
 $$\exists c<p:\forall x_1\in[c,p):\exists c_{x_1}<p:\forall x_2\in [c_{x_1},p): \cdots $$ $$\cdots\exists c_{x_1\ldots x_{n-1}}<p:\forall x_n\in [c_{x_1\ldots x_{n-1}},p) \in\R:\,(x_1,\ldots,x_n)\in S$$
 if and only if there is a strictly monotonously increasing sequence $(p_i)_{i\in\N}$ of real numbers with $p=\lim_{i\to\infty}p_i$ such that for every pairwise distinct natural numbers $j_1,\ldots,j_n$ we have $(p_{j_1},\ldots,p_{j_n})\in S$.
\end{lem}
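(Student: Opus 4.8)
The plan is to prove both implications together by induction on $n$, the workhorse being the base case $n=1$ and an auxiliary semi-algebraic set extracted from $S$. For $n=1$ the nested-quantifier statement says exactly that $[c,p)\subseteq S$ for some $c<p$; this trivially forces the existence of a strictly increasing sequence converging to $p$ inside $S$, and conversely, if $S$ --- a finite union of points and open intervals --- contains such a sequence, then choosing $c<p$ larger than every endpoint and every isolated point of $S$ lying below $p$ makes $(c,p)$ either entirely inside or entirely outside $S$, and the former must hold, so $[c',p)\subseteq S$ for any $c'\in(c,p)$. With the convention $[c,\infty):=\{x\ge c\}$ this also covers $p=\infty$.

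For the step from $n-1$ to $n$ I would introduce the \emph{fiber-interval set}
\[
A=\{(x_1,\ldots,x_{n-1})\in\R^{n-1}:\ \exists\,c<p\ \forall\,x_n\in[c,p):\ (x_1,\ldots,x_{n-1},x_n)\in S\},
\]
i.e.\ those $(n-1)$-tuples over which the last-coordinate fiber of $S$ contains an interval $[c,p)$. The defining condition is first-order over the ordered field $\R$, so $A$ is semi-algebraic by Tarski--Seidenberg, and $A$ is symmetric because $S$ is. Its role is a one-line bookkeeping identity: the innermost block ``$\exists c_{x_1\ldots x_{n-1}}<p\ \forall x_n\in[c_{x_1\ldots x_{n-1}},p):(x_1,\ldots,x_n)\in S$'' of the nested-quantifier statement for $S$ is literally the clause ``$(x_1,\ldots,x_{n-1})\in A$'', so that statement with $n$ variables is verbatim the nested-quantifier statement for $A$ with $n-1$ variables.

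For the direction ``sequence $\Rightarrow$ nested statement'': if $(p_i)\nearrow p$ has all $n$-tuples of distinct entries in $S$, then for distinct $j_1,\ldots,j_{n-1}$ the fiber $\{x_n:(p_{j_1},\ldots,p_{j_{n-1}},x_n)\in S\}$ contains every $p_k$ with $k\notin\{j_1,\ldots,j_{n-1}\}$, hence a sequence increasing to $p$, hence (by $n=1$) an interval $[c,p)$; thus $(p_{j_1},\ldots,p_{j_{n-1}})\in A$, and the induction hypothesis applied to the symmetric semi-algebraic set $A$ gives the nested statement for $A$, which is the nested statement for $S$. For the converse, the nested statement for $S$ is the nested statement for $A$, so the induction hypothesis yields $(q_i)\nearrow p$ with all $(n-1)$-tuples of distinct entries in $A$; fix for each such tuple a witness $c'(\cdot)<p$ of membership. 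I would then pass to a subsequence $p_i=q_{m_i}$ chosen greedily: with $m_1<\cdots<m_{n-1}$ arbitrary, and having $m_1<\cdots<m_{k-1}$, pick $m_k>m_{k-1}$ with $q_{m_k}$ exceeding the maximum of the \emph{finitely many} values $c'(m_{a_1},\ldots,m_{a_{n-1}})$ ranging over $(n-1)$-subsets $\{a_1<\cdots<a_{n-1}\}$ of $\{1,\ldots,k-1\}$ --- possible since that maximum is $<p$ and $q_i\to p$. Then for any $i_1<\cdots<i_n$ one has $(q_{m_{i_1}},\ldots,q_{m_{i_{n-1}}})\in A$ and $q_{m_{i_n}}\in[c'(m_{i_1},\ldots,m_{i_{n-1}}),p)$, so $(q_{m_{i_1}},\ldots,q_{m_{i_n}})\in S$; symmetry of $S$ promotes this to all orderings, and $(q_{m_i})$ is strictly increasing with limit $p$.

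I expect the greedy subsequence extraction to be the delicate point: ``all tuples of distinct entries lie in $S$'' is a requirement with infinitely many instances, and it is only because at stage $k$ merely the finitely many instances supported on already-chosen indices are active --- combined with cofinality of $(q_i)$ below $p$ --- that the construction is legitimate; this is also the only place where the symmetry of $S$ is genuinely used (to upgrade the single increasing arrangement the construction produces to every arrangement, and, earlier, to know that $A$ is symmetric so the induction hypothesis applies). The remaining things to be careful about are routine: checking the bookkeeping identity of the second paragraph literally, and keeping the cases $p\in\R$ and $p=\infty$ notationally uniform so that one appeal to Tarski--Seidenberg covers the semi-algebraicity of $A$ in both.
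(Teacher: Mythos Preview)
Your argument is correct and rests on the same ingredients as the paper's proof (semi-algebraic subsets of $\R$ are finite unions of intervals, Tarski--Seidenberg for the auxiliary set, a greedy construction of the sequence), but the organization is genuinely different. The paper treats the two implications separately: for ``property $\Rightarrow$ sequence'' it gives a direct recursive construction of the $p_i$ from the nested constants $c_{\ldots}$, with no induction; for ``sequence $\Rightarrow$ property'' it inducts by peeling off the \emph{first} coordinate, working with the one-dimensional set $S'=\{x_1<p:\text{the remaining }(n-1)\text{-fold nested statement holds}\}$ and applying the induction hypothesis to each slice $\{(x_2,\ldots,x_n):(p_k,x_2,\ldots,x_n)\in S\}$. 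You instead peel off the \emph{last} coordinate, introduce the $(n-1)$-dimensional set $A$, and exploit the clean bookkeeping identity ``nested statement for $S$ $=$ nested statement for $A$'' to run both implications through a single induction. What this buys you is uniformity---one auxiliary object serves both directions---at the cost of the extra greedy subsequence extraction in the forward direction, which the paper avoids by building the sequence directly from the constants $c_{\ldots}$. Both routes are of comparable length; yours makes the role of symmetry (needed for $A$ to be symmetric and to upgrade increasing tuples to arbitrary ones) more visible.
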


\begin{proof}
 Suppose that $S$ has the above property. We define our sequence $(p_i)_{i\in\N}$ recursively. Let $p_1=c$ and \small$$p_i=\max(\{c_{p_{k_1},\ldots,p_{k_j}}:\,j=1,\ldots,\min(n,i)-1,\, 1\leq k_1<\cdots<k_j<i\}\cup\{p_1,\ldots,p_{i-1}\})$$\normalsize for $i>1$. Clearly, the sequence is monotonously increasing. After replacing $p_i$ by $\max(p_i,p-1/i)$ if $p\in\R$ and by $\max(p_i,n)$ if $p=\infty$ we get $p=\lim_{i\to\infty}p_i$. Furthermore, after passing to a suitable subsequence, it is strictly monotonously increasing. Finally, we have by construction $(p_{j_1},\ldots,p_{j_n})\in S$ for all natural numbers $j_1<\cdots<j_n$ so the rest of the claim follows from the symmetry of $S$.
 
 Conversely, let $(p_i)_{i\in\N}$ be a sequence with the desired properties. We prove the statement by induction on $n$. In the case $n=1$ the set $S\subseteq\R$ is a finite union of intervals since semi-algebraic. Since it contains $(p_i)_{i\in\N}$ which converges to $p$ we thus find a $c<p$ such that $S$ contains $[c,p)$. For the induction step we consider the set $$S'=\{x_1<p:\exists c_{x_1}<p:\forall x_2\in [c_{x_1},p): \cdots$$ $$\cdots \exists c_{x_1\ldots x_{n-1}}<p:\forall x_n\in [c_{x_1\ldots x_{n-1}},p) \in\R:\,(x_1,\ldots,x_n)\in S\}.$$ We obtain $p_k\in S$ for all $k\in\N$ from applying the induction hypothesis to the set $\{(x_2,\ldots,x_n)\in\R^{n-1}:(p_k,x_2,\ldots,x_n)\in S\}$ and the sequence $(p_{k+i})_{i\in\N}$. This shows in particular that $S'$ has $p$ in its closure. Therefore, since $S'$ is also semi-algebraic, we find as above $c\in\R$ such that $S'$ contains all $x_1\in [c,p)$. This proves the claim.
\end{proof}
The results from \Cref{lem:salem} now allow us to establish the following equivariant quantifier elimination.
\begin{thm}\label{thm:tarskiseid}
 Let $Y\subseteq X(\R)$ be a basic equivariant semi-algebraic set. Then the image $\pi(Y)$ is a semi-algebraic subset of $V(\R)$.
\end{thm}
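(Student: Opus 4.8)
The plan is to freeze the finitely many variables that actually occur in the defining data, turn the ``$\forall\sigma\in G$'' quantifier into a statement about arbitrary real sequences, cut down to constant or strictly monotone sequences, and then invoke \Cref{lem:salem} together with the classical Tarski--Seidenberg theorem over $\R$. \emph{First} I would write $Y$ as in \Cref{def:semialgebaric} with data $f_1,\dots,f_r,g_1,\dots,g_s\in B$ and pick $N$ with $f_i,g_j\in A[x_1,\dots,x_N]$ for all $i,j$. Since $G$ fixes $A$ pointwise and $\sigma(x_k)=x_{\sigma(k)}$, for $p=(v,(t_i)_{i\in\N})\in X(\R)=V(\R)\times\A^1_{\R,\infty}(\R)$ and $\sigma\in G$ one has $(\sigma f_i)(p)=f_i(v;t_{\sigma(1)},\dots,t_{\sigma(N)})$, and likewise for $\sigma g_j$; as $\sigma$ runs over $\Sym(\infty)$ the tuple $(\sigma(1),\dots,\sigma(N))$ runs over all $N$-tuples of pairwise distinct naturals. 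Hence $p\in Y$ iff $(t_{j_1},\dots,t_{j_N})$ lies in
$$T_v=\{s\in\R^N:\ f_i(v;s)\ge0\ (1\le i\le r),\ g_j(v;s)>0\ (1\le j\le s)\}$$
for all pairwise distinct $j_1,\dots,j_N$, and since this must hold for every ordering of an $N$-element index set, it is equivalent to the same statement with $T_v$ replaced by the \emph{symmetric} semi-algebraic set $S_v:=\bigcap_{\tau\in\Sym(N)}\tau T_v$, where $\Sym(N)$ permutes coordinates. Moreover, after fixing a closed embedding $V(\R)\subseteq\R^M$, the family $\{(v,s):s\in S_v\}$ is semi-algebraic in $\R^M\times\R^N$ because each $f_i(v;s),g_j(v;s)$ is polynomial in the coordinates of $(v,s)$. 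Thus $v\in\pi(Y)$ iff there is a real sequence $(t_i)_{i\in\N}$ all of whose $N$-tuples with pairwise distinct indices lie in $S_v$.

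\emph{Next} I would observe that this property is inherited by subsequences and that every real sequence has a constant subsequence or a strictly monotone one. A constant subsequence with value $c$ witnesses $(c,\dots,c)\in S_v$, and conversely the constant sequence with that value works as soon as $(c,\dots,c)\in S_v$; a strictly decreasing sequence becomes strictly increasing under the reflection $\rho(s_1,\dots,s_N)=(-s_1,\dots,-s_N)$, which commutes with coordinate permutations, so $\rho(S_v)$ is again symmetric and semi-algebraic and varies semi-algebraically in $v$. Writing $A(S)$ for the assertion ``there is a strictly increasing real sequence $(p_i)_{i\in\N}$ all of whose $N$-tuples with pairwise distinct indices lie in $S$'', one obtains
$$v\in\pi(Y)\iff\bigl(\exists c\in\R:\ (c,\dots,c)\in S_v\bigr)\ \text{ or }\ A(S_v)\ \text{ or }\ A(\rho(S_v)).$$
A strictly increasing real sequence converges to its supremum $p\in\R\cup\{\infty\}$, so by \Cref{lem:salem} the assertion $A(S_v)$ is equivalent to: $S_v$ has the nested ``$\exists$--$\forall$'' property of that lemma for $p=\infty$, or for some real $p$.

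\emph{Finally} I would check that all three sets cut out above are semi-algebraic in $V(\R)$. For fixed $N$ the nested-quantifier property of \Cref{lem:salem}, with the atom ``$(x_1,\dots,x_N)\in S$'' replaced by the quantifier-free description of $\{(v,s):s\in S_v\}$, is a first-order formula over real closed fields in the coordinates of $v$ and in $p$ (the case $p=\infty$ being obtained by deleting the constraints ``$<p$''); by Tarski--Seidenberg it defines a semi-algebraic set, and projecting away $p$ shows that $\{v:A(S_v)\}$, and likewise $\{v:A(\rho(S_v))\}$, is semi-algebraic. And $\{v:\exists c\in\R,\ (c,\dots,c)\in S_v\}$ is the image of the semi-algebraic set $\{(v,c):(c,\dots,c)\in S_v\}$ under a coordinate projection, hence semi-algebraic. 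A finite union of semi-algebraic sets then yields $\pi(Y)$.

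\emph{Main obstacle.} The real work is carried by \Cref{lem:salem}, which is precisely what allows one to replace the infinitary condition ``some real sequence has all its $N$-subtuples in $S$'' by a finitary nested-quantifier condition that ordinary quantifier elimination can digest. Beyond that, the only delicate points are the passage to a constant or strictly monotone subsequence, so that \Cref{lem:salem} is applicable at all, and the uniformity in $v$, i.e.\ that the nested-quantifier condition defines a semi-algebraic set in the parameter $v$ --- which is just Tarski--Seidenberg applied to the semi-algebraic family $(S_v)_{v\in V(\R)}$.
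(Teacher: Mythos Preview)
Your proposal is correct and follows essentially the same route as the paper's proof: reduce to finitely many $x$-variables, symmetrize to obtain a $\Sym(N)$-invariant semi-algebraic set $S_v\subset\R^N$, observe that the fiber over $v$ is nonempty iff some real sequence has all its pairwise-distinct $N$-tuples in $S_v$, pass to a constant or strictly monotone subsequence, and invoke \Cref{lem:salem} for the monotone case. The only differences are cosmetic: you make the parametric dependence on $v$ and the reflection trick for the decreasing case explicit, where the paper simply says ``analogously''.
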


\begin{proof}
 We have to show that the condition on the fiber over a point $v\in V(\R)$ being nonempty is a semi-algebraic condition on $v$. To that end let $f_1,\ldots,f_r,g_1,\ldots,g_s\in \R[x_1,x_2,\ldots]$
 and consider \small\[T=\bigcap_{\sigma\in G}\{p=(p_i)_{\in\N}:\, (\sigma f_1)(p)\geq0,\ldots, (\sigma f_r)(p)\geq0, \, (\sigma g_1)(p)>0,\ldots, (\sigma g_s)(p)>0\}.\]\normalsize Then we already have $f_1,\ldots,f_r,g_1,\ldots,g_s\in \R[x_1,\ldots,x_n]$ for some $n\in\N$. Let \[S=\bigcap_{\sigma\in \Sym(n)}\{p\in \R^n: (\sigma f_1)(p)\geq0,\ldots, (\sigma f_r)(p)\geq0, \, (\sigma g_1)(p)>0,\ldots, (\sigma g_s)(p)>0\}.\] A sequence $(p_i)_{\in\N}$ of real numbers lies in $T$ if and only if for every pairwise distinct natural numbers $j_1,\ldots,j_n$ we have $(p_{j_1},\ldots,p_{j_n})\in S$. Note that any subsequence of a sequence in $T$ will also lie in $T$. It follows that $T$ is nonempty if and only if one of the following conditions on $T$ is true:\begin{enumerate}[a)]
 \item $T$ contains a strictly monotonously increasing sequence;
 \item $T$ contains a strictly monotonously decreasing sequence;
 \item $T$ contains a constant sequence.
 \end{enumerate}
 The statement now follows, since every of these conditions is semi-algebraic. 
 Indeed, by \Cref{lem:salem} condition $a)$ is semi-algebraic. Analogously, condition $b)$ is semi-algebraic as well. Finally, let $\tilde{f}_i=f_i(x,\ldots,x)$ and $\tilde{g}_j=g_j(x,\ldots,x)$ for all $i,j$. Then condition $c)$ is equivalent to $$\emptyset\neq\{p\in \R: \tilde{f}_1(p)\geq0,\ldots, \tilde{f}_r(p)\geq0, \, \tilde{g}_1(p)>0,\ldots, \tilde{g}_s(p)>0\}$$ which is also semi-algebraic.
\end{proof}

The argument from the proof of \Cref{thm:tarskiseid} does not extend to arbitrary $n$ as the next example shows.

\begin{ex}\label{ex:last}
 Consider the set $S$ of sequences $(p_i)_{i\in\N}\subset\R^5$ that satisfy $$p_i=(a_i,b_i,x_i,y_i,z_i),\, a_i=a_j, b_i=b_j, a_i^2+b_i^2>1 \textrm{ and }a_ix_i+b_iy_i+z_i>0$$for all $i,j\in\N$. Let $S'$ be the set obtained by projecting $S$ onto the last three variables. Thus $S'$ consists of those sequences $s=(x_i,y_i,z_i)_{i\in\N}$ with the property that the convex set $$K(s)=\{(a,b)\in\R^2:\,ax_i+by_i+z_i>0\textrm{ for all }i\in\N\}$$intersects the complement of the unit disc. We will show that $S'$ can not be characterized by bounded-size sub-sequences. To that end we will construct for every $N\in\N$ a sequence of $N$ points in $\R^3$ that can not be extended to a sequence in $S'$ while every sub-sequence of length $N-1$ can: Let $(x_i,y_i,z_i)\in\R^3$ for $i=1,\ldots,N$ such that $z_i>0$ and such that the linear polynomial $ax_i+by_i+z_i=0$ defines the $i$th edge of the regular $N$-gon $C_n$ inscribed in the unit circle $a^2+b^2=1$. Clearly, $$C_n=\{(a,b)\in\R^2:\,ax_i+by_i+z_i>0\textrm{ for all }i=1,\ldots,N\}$$ is entirely contained in the closed unit disc but $$\{(a,b)\in\R^2:\,ax_i+by_i+z_i>0\textrm{ for all }i=1,\ldots,N\textrm{ with }i\neq j\}$$is not for all $j=1,\ldots, N$.
\end{ex}
\begin{rem}
In fact, \Cref{ex:last} actually shows that, similarly to the setup studied in \cite{moitra}, it is impossible to expect quantifier elimination for any kind of $G$-equivariant description involving only finitely many orbits of polynomials, whenever $n>1$.
\end{rem}
\bigskip

  \bibliographystyle{abbrv}
  \bibliography{biblio}
 \end{document}